\newtheorem{thm}{Theorem}[section]
\newtheorem{prop}[thm]{Proposition}
\newtheorem{lem}[thm]{Lemma}
\newtheorem{cor}[thm]{Corollary}
\newtheorem{conj}[thm]{Conjecture}
\theoremstyle{remark}
\newtheorem{rem}[thm]{Remark}
\newtheorem{ex}[thm]{Example}
\theoremstyle{definition}
\newtheorem{defn}[thm]{Definition}
\newcommand{\bN}{\mathbb N}
\newcommand{\bZ}{\mathbb Z}
\newcommand{\C}{\mathbb{ C}}
\newcommand{\Q}{\mathbb{ Q}}
\newcommand{\R}{\mathbb{ R}}
\newcommand{\Z}{\mathbb{ Z}}
\newcommand\norm{\bBigg@{0.8}}
\newcommand{\inparens}[2][flex]{\csname #1l\endcsname(#2%
  \csname #1r\endcsname)\mathclose{}}
\newcommand{\inangles}[2][flex]{\csname #1l\endcsname\langle#2%
  \csname #1r\endcsname\rangle\mathclose{}} 
\DeclareMathOperator{\im}{im}
\newcommand{\sv}[2][flex]{\csname #1l\endcsname\|#2%
  \csname #1r\endcsname\|} 
\newcommand{\lone}[2][flex]{\csname #1l\endcsname\|#2%
  \csname #1r\endcsname\|_1} 
\newcommand{\supn}[2][flex]{\csname #1l\endcsname\|#2%
  \csname #1r\endcsname\|_{\infty}} 
\newcommand{\hsing}[3][flex]{H_{#2}\inparens[#1]{#3}}
\title[Fundamental classes not representable by products]
      {Fundamental classes\\ not representable by products}
\author{D.~Kotschick}
\address{Mathematisches Institut, {\smaller LMU} M\"unchen,
Theresienstr.~39, 80333~M\"unchen, Germany}
\email{dieter@member.ams.org}
\author{C.~L\"oh}
\address{Mathematisches Institut, {\smaller WWU} M\"unster,
  Einsteinstr.~62, 48143~M\"unster, Germany}
\email{clara.loeh@uni-muenster.de}
\date{\today; \copyright{\ D.~Kotschick and C.~L\"oh 2008};\\ 
MSC 2000 classification: primary 53C23, secondary 20F34, 20F67, 57N65}
\thanks{The first author is a member of the DFG priority program in Global Differential Geometry. 
The second author would like to thank Wolfgang L\"uck for clarifying discussions.} 
\begin{document}

\begin{abstract}
  We prove that rationally essential manifolds with suitably large
  fundamental groups do not admit any maps of non-zero degree from
  products of closed manifolds of positive dimension. Particular
  examples include all manifolds of non-positive sectional curvature
  of rank one and all irreducible locally symmetric spaces of
  non-compact type.  For closed manifolds from certain classes, say
  non-positively curved ones, or certain surface bundles over
  surfaces, we show that they do admit maps of non-zero degree from
  non-trivial products if and only if they are virtually diffeomorphic
  to products.
\end{abstract}

\maketitle

\section{Introduction}\label{s:intro}

The existence of a continuous map~$M\longrightarrow N$ of non-zero
degree defines an interesting transitive relation, denoted~$M\geq N$,
on the homotopy types of closed oriented
manifolds~\cite{MT,Gromov,CT}. In every dimension, homotopy spheres
represent an absolutely minimal element for this relation. In
dimension two, the relation coincides with the order given by the
genus, and substantial information is now known in dimension three as
well~\cite{WangICM}.

In general, if $M$ dominates $N$, then $M$ is at least as complicated
as $N$. For example, $M\geq N$ implies that the Betti numbers of~$M$
are at least as large as those of~$N$ and that the fundamental group
of~$M$ surjects onto a finite index subgroup of the fundamental group
of~$N$.  However, these necessary conditions are in general very far
from being sufficient, and the relation $\geq$ is poorly understood in
higher dimensions.  Nevertheless, interesting results about it have
been obtained for two different kinds of targets $N$: either $N$ is
assumed to be highly connected, or $N$ is assumed to be negatively
curved, or at least to have a large universal covering in a suitable
sense.  In the highly connected case the methods of algebraic topology
have been successfully applied to the study of the domination relation
in the work of Duan and Wang~\cite{DW1,DW2}.  At the opposite end of
the spectrum, for manifolds with large universal coverings,
interesting results have been obtained via more geometric methods.
These include Gromov's theory of bounded
cohomology~\cite{Gromov,ivanov}, most notably the concept of
simplicial volume, and the theory of harmonic maps, as applied to the
domination question by Siu, Sampson, Carlson--Toledo and others,
cf.~\cite{CT,ABCKT} and the literature quoted there.

In this paper we prove that certain manifolds cannot be dominated by
any non-trivial product of closed manifolds.  One of our motivations
stems from Gromov's discussion of functorial semi-norms on
homology~\cite[Chapter~5$G_+$]{gromovmetric}, where the issue of
representing even degree homology classes by products of surfaces is
raised. Gromov suggested that many interesting homology classes should
not be representable by products (of surfaces) and singled out the
fundamental classes of irreducible locally symmetric spaces as
specific candidates.  As a special case of our results, we confirm
Gromov's suggestion in the following general form: if $P$ is any
non-trivial product of closed manifolds and $N$ is a closed
irreducible locally symmetric space of non-compact type, then~$P\ngeq
N$; see Corollary~\ref{c:locsym}. Another motivation for results of
the type~$P \ngeq N$ comes from the study of diffeomorphism groups,
where the special case~$(M\times S^1) \ngeq N$ occurs~\cite{KKM}.

Our methods, while inspired by differential geometry and by Gromov's
theory of the simplicial volume~\cite{Gromov}, are, for the most part,
elementary.  We combine basic homotopy theory with the discussion of
certain purely algebraic properties of fundamental groups.  More
specifically, we translate domination by products on the level of
manifolds into properties of the corresponding fundamental groups. As
the images of the fundamental groups of the factors commute in the
fundamental group of the target and generate a subgroup of finite
index, domination by a product forces the fundamental group of the
target to have a certain amount of commutativity. This alone is often
enough to prove $P\ngeq N$.

For the formulation of our results it is convenient to use the
following terminology due to Gromov~\cite{Gromov}, compare
also~\cite{HKRS}.
\begin{defn}
  A closed, oriented, connected $n$-manifold $N$ is called essential
  if $H_n(c_N)([N])\neq 0\in H_n(B\pi_1(N);\Z)$, where $c_N\colon
  N\longrightarrow B\pi_1(N)$ classifies the universal covering of
  $N$. It is rationally essential if $H_n(c_N)([N])\neq 0$
  in~$H_n(B\pi_1(N);\Q)$.
\end{defn}
Sufficient conditions to ensure essentialness are: asphericity
(obviously), non-zero simplicial volume~\cite{Gromov},
enlargeability~\cite{HKRS,HS}, or the non-vanishing of certain
asymptotic invariants, like the minimal volume entropy or the
spherical volume~\cite{B}. All these properties, except possibly the
last one, actually ensure rational essentialness.

We will show that domination of a manifold by a product implies that
its fundamental group contains many elements with large centralisers.
A simple illustration of this phenomenon is the following result
proved in Section~\ref{proofsec}.
\begin{prop}\label{p:center}
  Let $N$ be a closed, oriented, connected rationally essential
  manifold whose fundamental group has finite centre.  For no closed,
  oriented, connected manifold~$M$ is there a map~$M\times S^1
  \longrightarrow N$ of degree~$1$.
\end{prop}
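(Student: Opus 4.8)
The plan is to pass to fundamental groups and exploit the central $S^1$-factor. Suppose for contradiction that $f\colon M\times S^1\to N$ is a map of degree~$1$. First I would recall the standard fact that a degree-one map between closed oriented connected manifolds induces a surjection on fundamental groups: if $\mathrm{im}(f_*)$ had infinite index the map would factor through an infinite, hence non-compact, covering of~$N$ and so have degree zero, while a finite index~$j$ forces $j\mid\deg f$. Thus $f_*\colon \pi_1(M)\times\Z\to\pi_1(N)$ is onto, where I use $\pi_1(M\times S^1)=\pi_1(M)\times\Z$ with the factor $\Z=\pi_1(S^1)$ central. Setting $z:=f_*(0,1)$, this element commutes with the image of $\pi_1(M)$; since that image together with $z$ generates all of $\pi_1(N)$, the element $z$ is central in $\pi_1(N)$.

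Now the hypothesis enters: as the centre of $\pi_1(N)$ is finite, $z$ has finite order, say $k$. Hence $f_*(0,k)=z^k=1$, and $f_*$ factors as
\[
\pi_1(M)\times\Z \xrightarrow{\ \mathrm{id}\times q\ } \pi_1(M)\times\Z/k \xrightarrow{\ \bar f_*\ } \pi_1(N),
\]
where $q\colon\Z\to\Z/k$ is the projection. Passing to classifying spaces and using that the map $c_N\circ f$ into the aspherical target $B\pi_1(N)$ is determined up to homotopy by $f_*$, I would factor $c_N\circ f$ as
\[
M\times S^1 \xrightarrow{\,c_M\times\mathrm{id}\,} B\pi_1(M)\times S^1 \xrightarrow{\,\mathrm{id}\times Bq\,} B\pi_1(M)\times B(\Z/k)\xrightarrow{\,B\bar f_*\,} B\pi_1(N),
\]
identifying $S^1=B\Z$ and $c_{S^1}$ with the identity.

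The contradiction comes from a homology computation. Since $\deg f=1$ we have $(c_N)_*[N]=(c_N\circ f)_*\bigl([M]\times[S^1]\bigr)$. Tracking $[M]\times[S^1]$ through the factorisation above, the first map sends it to $(c_M)_*[M]\times[S^1]$, and then $\mathrm{id}\times Bq$ sends this, by the K\"unneth formula with $\Q$-coefficients, to $(c_M)_*[M]\times(Bq)_*[S^1]$. But $(Bq)_*[S^1]$ lies in $H_1(B(\Z/k);\Q)$, which vanishes because $B(\Z/k)$ is rationally acyclic in positive degrees. Hence the whole class maps to $0$, so $(c_N)_*[N]=0$ in $H_n(B\pi_1(N);\Q)$, contradicting rational essentialness of~$N$. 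The one point needing care — and the main obstacle — is the bookkeeping that makes $z$ genuinely central and lets $f_*$ descend to $\pi_1(M)\times\Z/k$ compatibly with the product structure on classifying spaces; once this is in place, the vanishing of the rational homology of $B(\Z/k)$ does the rest.
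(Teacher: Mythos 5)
Your proposal is correct and follows essentially the same route as the paper's own proof: surjectivity of $\pi_1(f)$ forces the circle factor into the (finite) centre, and the resulting factorisation through the classifying space of a finite group kills the class $(c_N)_*[N]$ rationally via the K\"unneth formula. The only cosmetic difference is that you rebuild the relevant naturality diagram by hand for the special case $M\times S^1$, whereas the paper invokes its general Proposition~\ref{p:natural}.
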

Results in a similar spirit arise in the study of diffeomorphism
groups~\cite{KKM}.

In order to exclude more general products, we require that the
fundamental group of the target manifold cannot be dominated by a
non-trivial product in the following sense:
  \begin{defn}\label{repgroupsdef}
    An infinite group $\Gamma$ is not presentable by a product if, for
    every homomorphism $\varphi \colon \Gamma_1 \times\Gamma_2
    \longrightarrow\Gamma$ onto a subgroup of finite index, one of the
    factors $\Gamma_i$ has finite image $\varphi
    (\Gamma_i)\subset\Gamma$.
  \end{defn}
  Using Definition~\ref{repgroupsdef}, which will be analysed in
  detail in Section~\ref{s:alg}, we prove the following topological
  result in Section~\ref{proofsec}:
\begin{thm}\label{t:top}
  If $N$ is a closed, oriented, connected rationally essential
  manifold whose fundamental group is not presentable by a product,
  then $P\ngeq N$ for any non-trivial product $P$ of closed, oriented,
  connected manifolds.
\end{thm}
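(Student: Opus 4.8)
The plan is to argue by contradiction, translating a hypothetical non-zero degree map into a homomorphism of fundamental groups to which the hypotheses apply. Suppose that $P = P_1 \times P_2$ is a non-trivial product of closed, oriented, connected manifolds with $\dim P_i = n_i > 0$, and that there is a map $f \colon P \longrightarrow N$ of non-zero degree; in particular $n = n_1 + n_2 = \dim N$. First I would record the standard fact that $\varphi := \pi_1(f) \colon \pi_1(P) \longrightarrow \pi_1(N)$ has image of finite index: if the image had infinite index, then $f$ would lift to the corresponding infinite-sheeted, hence non-compact, covering $\widehat N \longrightarrow N$, and since $H_n(\widehat N; \Z) = 0$ for a connected non-compact $n$-manifold, the fundamental class $[P]$ would die in $\widehat N$, forcing $\deg(f) = 0$. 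Writing $\Gamma_i = \pi_1(P_i)$, so that $\pi_1(P) = \Gamma_1 \times \Gamma_2$, the hypothesis that $\pi_1(N)$ is not presentable by a product then yields, after possibly interchanging the factors, that $\varphi(\Gamma_2)$ is finite. Note that $\pi_1(N)$ is indeed infinite, so that Definition~\ref{repgroupsdef} applies: otherwise $H_n(B\pi_1(N); \Q) = 0$ for $n \geq 1$, contradicting rational essentialness.

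The second step is to push the fundamental class into $B\pi_1(N)$. Since $B\pi_1(N)$ is aspherical, the map $c_N \circ f$ is homotopic to $B\varphi \circ c_P$, where $c_P \colon P \longrightarrow B\pi_1(P) = B\Gamma_1 \times B\Gamma_2$ is the classifying map; moreover $c_P = c_{P_1} \times c_{P_2}$ and $[P] = [P_1] \times [P_2]$, so by naturality of the homology cross product
\[
(c_P)_*[P] = a \times b, \qquad a = (c_{P_1})_*[P_1] \in H_{n_1}(B\Gamma_1; \Q), \quad b = (c_{P_2})_*[P_2] \in H_{n_2}(B\Gamma_2; \Q).
\]

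For the final step I would exploit the commuting images. Setting $G_i = \varphi(\Gamma_i) \subset \pi_1(N)$, the subgroups $G_1$ and $G_2$ commute elementwise, so multiplication $m \colon G_1 \times G_2 \longrightarrow \pi_1(N)$ is a genuine homomorphism and $\varphi = m \circ (\psi_1 \times \psi_2)$ with $\psi_i = \varphi|_{\Gamma_i}$. Hence
\[
(c_N \circ f)_*[P] = (B\varphi)_*(a \times b) = (Bm)_*\bigl((B\psi_1)_* a \times (B\psi_2)_* b\bigr).
\]
Because $G_2$ is finite and $n_2 > 0$ we have $H_{n_2}(BG_2; \Q) = 0$, so $(B\psi_2)_* b = 0$ and the whole expression vanishes. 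On the other hand, rational essentialness gives $(c_N \circ f)_*[P] = \deg(f)\cdot H_n(c_N)([N]) \neq 0$ in $H_n(B\pi_1(N); \Q)$, a contradiction; the symmetric case in which $\varphi(\Gamma_1)$ is finite is identical.

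The main obstacle is the bookkeeping in this last step. One must resist using a multiplication on all of $\pi_1(N)$, which is a homomorphism only in the abelian case, and instead observe that the elementwise-commuting images $G_1, G_2$ supply the honest homomorphism $m$, after which the factorisation $B\varphi = Bm \circ (B\psi_1 \times B\psi_2)$ reduces the vanishing to that of $(B\psi_2)_* b$. The latter follows because $\psi_2$ factors through the finite group $G_2$, whose classifying space has trivial rational homology in positive degrees. The role of rational essentialness is solely to guarantee that the image of $[P]$ in $H_n(B\pi_1(N);\Q)$ is non-zero to begin with, so that killing one tensor factor already produces the contradiction.
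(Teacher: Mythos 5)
Your argument is correct and is essentially the paper's own proof: the finite-index statement and the commuting images are Lemma~\ref{l:easy}, the factorisation of $H_n(c_N)(d\cdot[N])$ through the cross product $\alpha_1\times\alpha_2$ is Proposition~\ref{p:natural}, and the contradiction comes from the vanishing of rational homology of a finite group in positive degrees in both cases. The only difference is the logical direction (you run the contradiction through the finite factor, while the paper deduces that both $\Gamma_i$ are infinite and hence that $\pi_1(N)$ is presented by a product), which is an immaterial repackaging of the same steps.
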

This is complemented by an algebraic result, proved in
Section~\ref{s:alg}, providing examples of groups not presentable by
products:
\begin{thm}\label{t:alg}
  The following groups are not presentable by products:
\begin{enumerate}[\normalfont{(MCG)}]
\item[\normalfont{(H)}] hyperbolic groups that are not virtually
  cyclic,
\item[\normalfont{(N-P)}] fundamental groups of closed Riemannian
  manifolds of non-positive sectional curvature of rank one and of
  dimension~$\geq 2$,
\item[\normalfont{(MCG)}] mapping class groups of closed oriented
  surfaces of genus $\geq 1$.
\end{enumerate}
\end{thm}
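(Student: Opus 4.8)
The plan is to treat all three families by a single template, reducing the statement to a property of centralisers. Suppose $\varphi\colon\Gamma_1\times\Gamma_2\longrightarrow\Gamma$ maps onto a subgroup of finite index, and set $A=\varphi(\Gamma_1)$ and $B=\varphi(\Gamma_2)$. These images commute, and $AB=\varphi(\Gamma_1\times\Gamma_2)$ has finite index in $\Gamma$. Assuming for a contradiction that both $A$ and $B$ are infinite, I want to show that $AB$, and hence $\Gamma$, is virtually abelian, which is forbidden in each of the three cases. In each family one singles out a class of \emph{rank-one elements}: infinite-order elements in (H), rank-one axial isometries of $\tilde N$ in (N-P), and pseudo-Anosov mapping classes in (MCG). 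The properties I would extract are: (i) every finite-index subgroup of $\Gamma$ contains such an element, namely a suitable power of a fixed one; (ii) the centraliser of such an element is virtually cyclic; and (iii) an infinite-order element commuting with a rank-one element is again rank-one.

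Granting (i)--(iii), choose a rank-one element $h\in AB$ and write $h=ab$ with $a\in A$, $b\in B$. Since $A$ and $B$ commute, $a$ and $b$ commute with each other and with $h$, so both lie in $C_\Gamma(h)$, which is virtually cyclic by (ii). The elements $a$ and $b$ cannot both have finite order, for then the abelian group $\langle a,b\rangle$ they generate would be finite and could not contain the infinite-order element $h=ab$; say $a$ has infinite order. By (iii), $a$ is rank-one, so $C_\Gamma(a)$ is virtually cyclic; as $B$ commutes with $a$ we get $B\subseteq C_\Gamma(a)$, whence $B$ is virtually cyclic. The symmetric argument, applied to an infinite-order element of $B$, which is rank-one by (iii) because it commutes with $a$, shows that $A$ is virtually cyclic too. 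Then $AB$, being a quotient of $A\times B$, is virtually abelian of rank at most two and of finite index in $\Gamma$.

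It remains to exclude $\Gamma$ being virtually abelian and to verify (i)--(iii). For (H) this is classical: torsion subgroups of hyperbolic groups are finite, giving (i); centralisers of infinite-order elements are virtually cyclic, giving (ii) and (iii) at once; and a virtually abelian hyperbolic group is virtually cyclic since hyperbolic groups contain no $\Z^2$, contradicting the standing hypothesis. For (MCG) I would first reduce genus one to (H), since $\mathrm{Mod}(\Sigma_1)\cong SL(2,\Z)$ is virtually free and not virtually cyclic. For genus $\geq 2$, the pseudo-Anosov class supplies (i)--(iii) by the work of Ivanov and McCarthy: finite-index subgroups contain pseudo-Anosov elements, centralisers of pseudo-Anosovs are virtually cyclic, and an infinite-order mapping class commuting with a pseudo-Anosov preserves its invariant foliations and is therefore itself pseudo-Anosov; and $\mathrm{Mod}(\Sigma_g)$ is manifestly not virtually abelian.

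The main obstacle is case (N-P), where (i)--(iii) rest on the geometry of rank-one isometries of the Hadamard manifold $\tilde N$. Here I would invoke Ballmann's theory: the rank-one hypothesis together with cocompactness of the $\Gamma$-action produces a rank-one axial isometry in $\Gamma$, which after passing to powers gives (i); the virtually cyclic centraliser (ii) and the fact that an infinite-order isometry commuting with a rank-one axial isometry is again rank-one axial with the same axis (iii) follow from the flat strip theorem together with the North--South dynamics of rank-one isometries on the ideal boundary. Finally, a virtually abelian group acting properly cocompactly by isometries on $\tilde N$ would force $N$ to be flat of rank $\dim N\geq 2$, contradicting the rank-one hypothesis. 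Assembling the precise statements from the rank-one literature, and confirming the commuting-centraliser facts for mapping class groups, is the part demanding the most care.
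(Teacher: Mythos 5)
Your argument is correct in outline, and for cases (H) and (MCG) it is essentially the paper's proof in a uniform packaging: the paper likewise finds a ``generic'' element (infinite order, resp.\ pseudo-Anosov) in one factor, traps the other factor inside its virtually cyclic centraliser, symmetrises, and derives the absurdity that $\Gamma$ is virtually abelian (see Propositions~\ref{hypgroupprop} and~\ref{p:MCG}). The one genuine difference in (MCG) is how you locate a pseudo-Anosov in a \emph{factor}: the paper uses that each $\Gamma_i$ is infinite and normal in the irreducible group $\Gamma_1\Gamma_2$, hence irreducible, hence contains a pseudo-Anosov (Ivanov's Corollaries~7.13 and~7.14), whereas you take a pseudo-Anosov $h$ in the product, write $h=ab$, and use your property (iii); both work, and your trick is arguably more elementary and more portable. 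The substantive divergence is in (N-P). The paper's proof (Proposition~\ref{p:NP}) is algebraic: the Ballmann--Eberlein rank is additive and finite-index invariant, so rank one forces irreducibility of $\Gamma$; Eberlein's theorem on the Euclidean de Rham factor gives trivial centres for all finite-index subgroups; and then the purely algebraic Lemma~\ref{l:center} (the kernel of the multiplication map $\Gamma_1\times\Gamma_2\to\Gamma$ is the central subgroup $\Gamma_1\cap\Gamma_2$) upgrades the surjection to a genuine direct-product splitting, contradicting irreducibility via Proposition~\ref{p:C}. You instead run the centraliser template using rank-one \emph{axial isometries}, which requires three nontrivial geometric inputs: Ballmann's theorem that a cocompact lattice on a rank-one Hadamard manifold contains a rank-one axial element, the flat-strip computation showing such an element has bounded parallel set and hence virtually cyclic centraliser, and the stability of the rank-one property under commuting/powers. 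All of these are true and in the literature, so I see no gap, but this is exactly the ``rank one groups are not presentable by products'' programme that the paper explicitly declines to carry out in Section~\ref{s:final} because of its technicalities; your version buys uniformity across the three families (and would extend, say, to groups with rank-one or contracting elements), while the paper's version buys a shorter proof from quotable theorems and, via Lemma~\ref{l:center}, the stronger splitting statements that are reused for Theorem~\ref{t:N-P}. If you write this up, the places to be most careful are the precise citation for the existence of a rank-one axial element in $\Gamma$ (this needs the duality condition, supplied here by cocompactness) and the discreteness of the image of the centraliser in $\mathrm{Isom}(\R)$ when you prove it is virtually cyclic.
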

The rank occurring in statement (N-P) can be taken to be either the
geometric rank of the Riemannian metric~\cite{BBE}, or the rank of the
fundamental group~\cite{BE}. It is a result of
Ballmann--Eberlein~\cite{BE} that the two agree. In
Section~\ref{s:final}, we discuss the r\^ole of this rank in our
context.

Of course, as the groups in Theorem~\ref{t:alg} are of geometric
origin, the proof uses information gleaned from geometry. The case of
fundamental groups of strictly negatively curved manifolds is
contained as a special case in both (H) and (N-P). For these groups it
is an elementary application of Preissmann's theorem to show that they
are not presentable by products. Our proofs of the cases (H) and (MCG)
follow the same line of argument, using the fact that most elements of
those groups have small centralisers.

Theorem~\ref{t:alg}, particularly statement (H), shows that our
methods are well suited to the study of targets that have some sort of
negative curvature property. This is also true for the applications of
the simplicial volume and of harmonic maps mentioned earlier. However,
in contrast with those techniques, our methods also apply to manifolds
and groups that are not non-positively curved at all. For instance,
the mapping class groups of surfaces of genus~$\geq 2$ occurring in
Theorem~\ref{t:alg} are not hyperbolic because they contain Abelian
subgroups of large rank.  In fact, they do not even have any
semi-simple actions by isometries
on~$\mathrm{CAT}(0)$-spaces~\cite{KL,bh}.

In the second half of this paper we generalise our results following
the philosophy of extending from the hyperbolic to a suitable
semi-hyperbolic situation. In Section~\ref{s:NP} the classical case of
Riemannian manifolds of non-positive sectional curvature is
considered. For such manifolds we prove that being dominated by a
product is equivalent to being virtually diffeomorphic to a product.
The result about closed irreducible locally symmetric spaces of
non-compact type predicted by Gromov is a special case of this more
general theorem.  Similarly, in Section~\ref{s:fibre} we prove that
the total spaces of certain fibrations are dominated by products if
and only if they are virtually trivial.

In Section~\ref{s:sv} we show that our assumptions on the fundamental
group can be relaxed at the expense of making a stronger assumption
than rational essentialness on the manifold we are dealing with.
Finally in Section~\ref{s:final} we discuss the domination relation
more generally, not restricting to product domains, and also
considering targets with finite fundamental groups.

\section{The topological argument}\label{proofsec}

In this section we investigate the relation between presentability by
products on the level of fundamental groups and domination of
rationally essential manifolds by non-trivial products. In particular,
we prove Proposition~\ref{p:center} and Theorem~\ref{t:top}.

Throughout this section, we consider the following situation. We
suppose that $N$ is a closed, oriented, connected $n$-manifold, and
$f\colon X_1\times X_2\longrightarrow N$ is a map of degree~$d\neq 0$
from a non-trivial product of closed, oriented, connected
manifolds~$X_i$.  We choose base points $x_i\in X_i$ and
$f(x_1,x_2)\in N$. All fundamental groups are taken with respect to
these base points. We write
        \begin{align*}
          f_1 & := f\vert_{X_1 \times \{x_2\}} 
                   \colon X_1 \times \{x_2\} \longrightarrow N \ ,\\
          f_2 & := f\vert_{\{x_1\} \times X_2} 
                   \colon \{x_1\} \times X_2 \longrightarrow N \ ,
        \end{align*}
and set $\Gamma_i := \im (\pi_1(f_i))\subset\pi_1(N)$.
Finally, let $\Gamma:=\im (\pi_1(f))\subset\pi_1(N)$.

\begin{lem}\label{l:easy}
  The subgroup $\Gamma$ has finite index in~$\pi_1(N)$ and
  multiplication in~$\pi_1(N)$ defines a surjective homomorphism
  $\varphi\colon\Gamma_1\times\Gamma_2\longrightarrow\Gamma$.
\end{lem}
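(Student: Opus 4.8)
The plan is to establish the two assertions — surjectivity of the multiplication homomorphism onto $\Gamma$, and finiteness of the index $[\pi_1(N):\Gamma]$ — essentially independently. The first follows by unwinding the product structure on $\pi_1(X_1\times X_2)$, while the second rests on a covering-space argument that exploits the non-vanishing of the degree.

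For the homomorphism I would start from the canonical identification $\pi_1(X_1\times X_2)\cong\pi_1(X_1)\times\pi_1(X_2)$ with respect to the base point $(x_1,x_2)$. Under this identification the inclusion $X_1\times\{x_2\}\hookrightarrow X_1\times X_2$ induces the inclusion of the first factor; since $f_1$ is the composite of $f$ with this inclusion, one obtains $\Gamma_1=\pi_1(f)\bigl(\pi_1(X_1)\times 1\bigr)$, and symmetrically $\Gamma_2=\pi_1(f)\bigl(1\times\pi_1(X_2)\bigr)$. The subgroups $\pi_1(X_1)\times 1$ and $1\times\pi_1(X_2)$ commute elementwise in the direct product, so their images $\Gamma_1$ and $\Gamma_2$ commute elementwise in $\pi_1(N)$. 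This commutation is precisely what is needed for the multiplication map $\varphi\colon(\gamma_1,\gamma_2)\mapsto\gamma_1\gamma_2$ to respect the group operations, hence to be a homomorphism. Surjectivity onto $\Gamma$ is then bookkeeping: every element of $\pi_1(X_1\times X_2)$ is a product $(a,1)\cdot(1,b)$, so $\Gamma=\im\pi_1(f)=\Gamma_1\Gamma_2=\im\varphi$.

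The finite-index statement is where the real content lies, and I expect it to be the main point to get right. Here I would use the hypothesis $d\neq 0$, that is, $f_*[X_1\times X_2]=d\cdot[N]\neq 0$ in $H_n(N;\Q)$. Arguing by contradiction, suppose $\Gamma$ has infinite index, and let $p\colon\bar N\to N$ be the connected covering corresponding to the subgroup $\Gamma=\im\pi_1(f)$. By the lifting criterion, $f$ lifts to a map $\bar f\colon X_1\times X_2\to\bar N$ with $p\circ\bar f=f$. Since $[\pi_1(N):\Gamma]=\infty$, the cover $\bar N$ is a connected non-compact $n$-manifold, whence $H_n(\bar N;\Q)=0$; therefore $\bar f_*[X_1\times X_2]=0$, and pushing forward by $p$ yields $f_*[X_1\times X_2]=0$, contradicting $d\neq 0$. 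Thus $\Gamma$ must have finite index in $\pi_1(N)$, which completes the argument.

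The only genuinely delicate ingredient is the vanishing of the top-dimensional rational homology of the non-compact cover $\bar N$; everything else is formal. I would therefore isolate the finite-index claim as the key step and treat the homomorphism and surjectivity as routine consequences of the product structure on the fundamental group.
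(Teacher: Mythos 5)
Your argument is correct and follows the same route as the paper: the paper's proof simply cites ``covering space theory'' for the finite-index claim, and the covering-space/non-compactness argument you spell out is precisely the standard argument being invoked, while your treatment of the multiplication homomorphism matches the paper's observation that $\Gamma_1$ and $\Gamma_2$ commute and together generate $\Gamma$. No gaps.
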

\begin{proof}
  As $f$ has non-zero degree, covering space theory shows that the
  image~$\Gamma = \im (\pi_1(f))$ has finite index in $\pi_1(N)$.  The
  second statement follows because $\Gamma_1 \cup \Gamma_2$
  generates~$\Gamma$, and the~$\Gamma_i$ commute with each other.
\end{proof}

\begin{prop}\label{p:natural}
  The diagram in Figure~\ref{classfig} is commutative. In particular,
  there are homology classes $\alpha_1 \in \hsing{\dim
    X_1}{B\Gamma_1;\Q}$ and $\alpha_2 \in \hsing{\dim
    X_2}{B\Gamma_2;\Q}$ satisfying
        \[ 
        H_n(c_N)(d \cdot [N]) = H_n(\overline{B\varphi})(\alpha_1 \times\alpha_2 ) \ . 
        \]
\end{prop}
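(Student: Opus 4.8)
The plan is to build the diagram of Figure~\ref{classfig} from the naturality of classifying maps together with the factorisation of $\pi_1(f)$ through $\varphi$, and then to extract the homology identity simply by evaluating on fundamental classes. First I would record, for each $i$, that $\pi_1(f_i)\colon\pi_1(X_i)\to\pi_1(N)$ has image $\Gamma_i$ by definition and hence factors as a surjection $\pi_1(X_i)\twoheadrightarrow\Gamma_i$ followed by the inclusion $\Gamma_i\hookrightarrow\pi_1(N)$. Applying the classifying space functor to this surjection and precomposing with the classifying map $c_{X_i}\colon X_i\to B\pi_1(X_i)$ produces a map $\varphi_i\colon X_i\to B\Gamma_i$, and I then set
\[
\alpha_i := H_{\dim X_i}(\varphi_i)([X_i])\in H_{\dim X_i}(B\Gamma_i;\Q),
\]
where $[X_i]$ is the rational fundamental class of $X_i$.

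The next step is the factorisation $\pi_1(f)=\varphi\circ(\pi_1(f_1)\times\pi_1(f_2))$ under the canonical identification $\pi_1(X_1\times X_2)\cong\pi_1(X_1)\times\pi_1(X_2)$. Here I would use that a general element $(g_1,g_2)$ is represented by the two commuting loops in $X_1\times\{x_2\}$ and $\{x_1\}\times X_2$, whence $\pi_1(f)(g_1,g_2)=\pi_1(f_1)(g_1)\cdot\pi_1(f_2)(g_2)$, which is precisely $\varphi$ evaluated at $(\pi_1(f_1)(g_1),\pi_1(f_2)(g_2))$. Passing to classifying spaces, this identifies $B\pi_1(f)$ up to homotopy with $\overline{B\varphi}\circ(\varphi_1\times\varphi_2)$, where $\overline{B\varphi}$ is the composite $B\Gamma_1\times B\Gamma_2\xrightarrow{B\varphi}B\Gamma\hookrightarrow B\pi_1(N)$. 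Combining this with the naturality homotopy $c_N\circ f\simeq B\pi_1(f)\circ c_{X_1\times X_2}$ and the standard homotopy $c_{X_1\times X_2}\simeq c_{X_1}\times c_{X_2}$ yields the asserted commutativity, with left vertical map $\varphi_1\times\varphi_2$.

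Finally I would apply $H_n(-;\Q)$ to the square and evaluate on $[X_1\times X_2]$. Along the top and right edges, $H_n(f)([X_1\times X_2])=d\cdot[N]$ by the definition of degree, so the outcome is $H_n(c_N)(d\cdot[N])$. Along the left and bottom edges, I would use $[X_1\times X_2]=[X_1]\times[X_2]$ together with the multiplicativity of the homology cross product under the product map $\varphi_1\times\varphi_2$ to get $H_n(\overline{B\varphi})(\alpha_1\times\alpha_2)$; equating the two expressions gives the stated identity.

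I expect the only genuinely delicate point to be the factorisation step: one must verify that decomposing an arbitrary class $(g_1,g_2)$ into commuting loops really yields $B\pi_1(f)\simeq\overline{B\varphi}\circ(\varphi_1\times\varphi_2)$ as homotopy classes of maps of classifying spaces, all base points being kept compatible through $f(x_1,x_2)$. Once this homotopy-commutativity is in hand, the remaining homology bookkeeping is routine.
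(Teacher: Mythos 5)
Your argument is correct and is essentially the paper's own proof: you build the same homotopy-commutative diagram (your $\varphi_i$ is exactly the paper's $B\pi_1(f_i)\circ c_{X_i}$, and your $\alpha_i$ coincide with the paper's), use the same factorisation of $\pi_1(f)$ through $\varphi$ via the canonical identification $\pi_1(X_1\times X_2)\cong\pi_1(X_1)\times\pi_1(X_2)$, and conclude by the same cross-product/degree computation on $[X_1\times X_2]=[X_1]\times[X_2]$. The factorisation step you flag as delicate is precisely what the paper calls the ``routine'' verification of homotopy commutativity of Figure~\ref{classfig}.
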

      \begin{figure}
        \begin{center}
          \makebox[0pt]{$%
          \xymatrix@C=1.4em{%
                X_1 \times X_2 
                \ar[rrrrr]^-f
                \ar[dr]^{c_{X_1} \times c_{X_2}}
                \ar[ddd]_-{c_{X_1 \times X_2}}
          &&&&& N 
                \ar[ddd]^-{c_N} 
                \\
              & B\pi_1(X_1) \times B\pi_1(X_2)
                \ar@<.5ex>@{}[rrr]^-{B\pi_1(f_1) \times B\pi_1(f_2)}
                \ar[rrr]
            &&& B\Gamma_1 \times B\Gamma_2 
                \ar@{-->}[ddr]^{\overline{B\varphi}}
                \\
              & B\bigl(\pi_1(X_1) \times \pi_1(X_2)\bigr)
                \ar@{<->}[u]^-{\simeq}
                \ar@<-.5ex>@{}[rrr]_-{B(\pi_1(f_1) \times \pi_1(f_2))}
                \ar[rrr]
                \ar[dl]^-{\ \qquad\smash{B\varphi'}}
            &&& B(\Gamma_1 \times \Gamma_2)
                \ar@{<->}[u]_-{\simeq}
                \ar[dr]_-{\smash{B\varphi}\quad}
                \\
                B\pi_1(X_1 \times X_2) 
                \ar[rrrrr]_-{B\pi_1(f)}
          &&&&& B\pi_1(N)
            }$}
        \end{center}
      \caption{Naturality of classifying maps}
      \label{classfig}
      \end{figure}
      \begin{proof}
        We first explain the notation occurring in
        Figure~\ref{classfig}. For a connected manifold $M$, we
        write~$c_M\colon M \longrightarrow B\pi_1(M)$ for the
        classifying map of the universal covering. Recall that every
        homomorphism~$\psi \colon K' \longrightarrow K$ of groups
        yields a continuous map~$B\psi \colon BK' \longrightarrow BK$
        that induces the given homomorphism~$\psi$ on the level of
        fundamental groups; moreover, $B\psi$ is characterised
        uniquely up to homotopy by this property.

        The vertical homotopy equivalences in the centre of the
        diagram are induced by the projections/inclusions on the level
        of groups.  The map~$B\varphi'$ is induced by the canonical
        isomorphism~$\varphi'\colon\pi_1(X_1) \times \pi_1(X_2)
        \longrightarrow \pi_1(X_1 \times X_2)$ given by the
        inclusions. Finally, $B\varphi$ is the map induced by the
        homomorphism $\varphi$ in Lemma~\ref{l:easy}.
              
        It is a routine matter to verify that the diagram in
        Figure~\ref{classfig} is commutative up to homotopy. By the
        homotopy invariance of homology, the corresponding diagram in
        homology is also commutative.

        In the following, we abbreviate the composition of~$B\varphi$
        with the homotopy equivalence~$B\Gamma_1 \times B\Gamma_2
        \longrightarrow B(\Gamma_1 \times \Gamma_2)$ by
        $\overline{B\varphi} \colon B\Gamma_1 \times B\Gamma_2
        \longrightarrow B\pi_1(N).$ Using the naturality of the
        homological cross-product, we obtain the following relation in
        homology:
        \begin{align*}
            \hsing n {c_N} (d  [N])
        & = \hsing n {c_N} \circ \hsing n {f}([X_1\times X_2])\\
        & = \hsing n {c_N} \circ \hsing n {f}([X_1] \times [X_2])\\
        & =       \hsing n {\overline{B\varphi}}
                  \circ \hsing[big] n {(B\pi_1(f_1) \times B\pi_1(f_2)) \circ (c_{X_1} \times c_{X_2})} 
                  ([X_1]\times [X_2]) \\
        & =       \hsing n {\overline{B\varphi}}(\alpha_1 \times \alpha_2),
        \end{align*}
        where we put $\alpha_i := \hsing {\dim X_i} {B\pi_1(f_i) \circ
          c_{X_i}}([X_i])\in \hsing{\dim X_i}{B\Gamma_i;\Q}$.
      \end{proof}
      
The proofs of the topological results stated in the introduction
are now completely straightforward.

\begin{proof}[Proof of Proposition~\ref{p:center}]
  If $f\colon M\times S^1\longrightarrow N$ has degree one, then
  $B\pi_1(f)$ is surjective, and so it sends the fundamental group of
  the circle to the centre~$C(\pi_1(N))$ of~$\pi_1(N)$. As this centre
  is assumed to be finite, its classifying space has trivial rational
  homology and we conclude
$$
\hsing[big]{1} {B\pi_1(f\vert_{S^1}) \circ c_{S^1}}([S^1])= 0 \in  \hsing[big]{1}{BC(\pi_1(N));\Q} \ .
$$ 
Now Proposition~\ref{p:natural} (with $d=1$) shows that $\hsing n
{c_N} ([N])$ vanishes, contradicting the rational essentialness of
$N$.
\end{proof}

\begin{proof}[Proof of Theorem~\ref{t:top}]
  Let $N$ be a closed, oriented, connected manifold, and assume that
  there exists a map~$f\colon X_1 \times X_2 \longrightarrow N$ of
  non-zero degree. If $N$ is rationally essential, then the homology
  classes $\alpha_i$ in Proposition~\ref{p:natural} are both
  non-trivial and in positive degrees. Therefore the groups $\Gamma_i$
  are both infinite. Now Lemma~\ref{l:easy} shows that $\pi_1(N)$ is
  presented by the product $\Gamma_1\times\Gamma_2$.
\end{proof}

\section{Groups not presentable by products}\label{s:alg}

In this section we discuss groups not presentable by products.  In
particular we prove Theorem~\ref{t:alg}, and we generalise the result
to certain groups arising as extensions.

\subsection*{Preliminaries}

Consider a homomorphism
$\varphi\colon\Gamma_1\times\Gamma_2\longrightarrow\Gamma$.  Without
loss of generality we can replace each $\Gamma_i$ by its image in
$\Gamma$ under the restriction of~$\varphi$, so that we may assume the
factors $\Gamma_i$ to be subgroups of $\Gamma$ and $\varphi$ to be
multiplication in $\Gamma$.

\begin{lem}\label{l:C}
  If $\Gamma$ is not presentable by a product, then every finite index
  subgroup has finite centre.
\end{lem}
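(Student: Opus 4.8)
The plan is to argue by contraposition: assuming that some finite index subgroup of $\Gamma$ has infinite centre, I will exhibit a presentation of $\Gamma$ by a product in the sense of Definition~\ref{repgroupsdef}, contradicting the hypothesis that $\Gamma$ is not presentable by a product.

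So suppose $\Lambda \leq \Gamma$ has finite index and that its centre $C := C(\Lambda)$ is infinite. The natural candidate for a presentation is the multiplication map
\[
  \varphi \colon C \times \Lambda \longrightarrow \Gamma, \qquad (c,\lambda) \longmapsto c\lambda .
\]
First I would check that $\varphi$ is a homomorphism; this is exactly where centrality enters, since $\varphi((c,\lambda)(c',\lambda')) = cc'\lambda\lambda'$ agrees with $\varphi(c,\lambda)\,\varphi(c',\lambda') = c\lambda c'\lambda'$ precisely because $c'$ commutes with $\lambda$ in $\Lambda$. Next I would identify the image: as $C \subseteq \Lambda$, we have $\im \varphi = C\Lambda = \Lambda$, which has finite index in $\Gamma$ by assumption, so $\varphi$ maps onto a subgroup of finite index as required by the definition.

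It then remains to see that both factors have infinite image. The first factor maps onto $\varphi(C \times \{1\}) = C$, which is infinite by hypothesis, while the second maps onto $\varphi(\{1\} \times \Lambda) = \Lambda$, which is infinite because it has finite index in the infinite group $\Gamma$. Thus neither factor has finite image, so $\Gamma$ is presentable by a product, contradicting the assumption. This closes the contrapositive and hence proves the lemma.

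I do not anticipate a serious obstacle: the construction is essentially forced once one decides to pair the centre of $\Lambda$ against $\Lambda$ itself, and it dovetails with the reduction in the Preliminaries that lets us take $\varphi$ to be multiplication. The only points demanding any care are the verification that centrality makes $\varphi$ a genuine homomorphism and the observation that the second factor's image $\Lambda$ is infinite, which relies on $\Gamma$ being infinite as built into Definition~\ref{repgroupsdef}.
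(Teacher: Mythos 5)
Your proposal is correct and is essentially the paper's own argument: the paper likewise observes that the multiplication map $\bar\Gamma\times C(\bar\Gamma)\longrightarrow\Gamma$ presents $\Gamma$ by a product when $C(\bar\Gamma)$ is infinite. You simply spell out the routine verifications (homomorphism property, finite-index image, infiniteness of both factors' images) that the paper leaves implicit.
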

\begin{proof}
  If $\bar\Gamma \subset \Gamma$ is a subgroup of finite index with
  infinite centre~$C(\bar\Gamma)$, then the multiplication
  map~$\bar\Gamma\times C(\bar\Gamma)\longrightarrow\Gamma$ shows that
  $\Gamma$ is presentable by a product.
\end{proof}

The following is a kind of converse to this observation:
\begin{prop}\label{p:C}
  If every subgroup of finite index in $\Gamma$ has trivial centre,
  then $\Gamma$ is irreducible if and only if it is not presentable by
  a product.
\end{prop}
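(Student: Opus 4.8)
The plan is to prove the two implications separately, observing that the trivial-centre hypothesis is needed for only one of them. Throughout I use the reduction from the Preliminaries: after replacing each $\Gamma_i$ by its image, a presentation of $\Gamma$ by a product amounts to a pair of commuting subgroups $\Gamma_1,\Gamma_2\subseteq\Gamma$, both infinite, whose product $\Gamma_1\Gamma_2$ has finite index in $\Gamma$. Correspondingly, I take $\Gamma$ to be \emph{reducible} when some finite index subgroup is isomorphic to a direct product of two infinite groups, and \emph{irreducible} otherwise.

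The implication ``not presentable by a product $\Rightarrow$ irreducible'' requires no hypothesis and is essentially formal; I argue the contrapositive. If $\Gamma$ is reducible, pick a finite index subgroup $\Gamma'\cong A\times B$ with $A,B$ infinite. Then the composite $A\times B\xrightarrow{\cong}\Gamma'\hookrightarrow\Gamma$ is a homomorphism onto a finite index subgroup with both factors of infinite image, so $\Gamma$ is presentable by a product.

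For the converse ``irreducible $\Rightarrow$ not presentable by a product'' I again argue the contrapositive, and this is where the hypothesis enters. Suppose $\Gamma$ is presentable by a product, so that after the reduction above we have commuting infinite subgroups $\Gamma_1,\Gamma_2$ with $\Gamma_1\Gamma_2$ of finite index in $\Gamma$. The key observation is that $\Gamma_1\cap\Gamma_2$ is central in $\Gamma_1\Gamma_2$: any element of the intersection lies in $\Gamma_1$, hence commutes with $\Gamma_2$, and lies in $\Gamma_2$, hence commutes with $\Gamma_1$, so it commutes with all of $\Gamma_1\Gamma_2$. Since $\Gamma_1\Gamma_2$ has finite index in $\Gamma$, the hypothesis forces $C(\Gamma_1\Gamma_2)=1$, whence $\Gamma_1\cap\Gamma_2=1$. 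Two commuting subgroups with trivial intersection generate an internal direct product, so multiplication is an isomorphism $\Gamma_1\times\Gamma_2\xrightarrow{\cong}\Gamma_1\Gamma_2$ onto a finite index subgroup; as both factors are infinite, $\Gamma$ is reducible.

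The whole argument is short, and the only real content is the point at which the hypothesis is invoked: I must apply the trivial-centre assumption to the finite index subgroup $\Gamma_1\Gamma_2$ rather than to $\Gamma$ itself, since $\Gamma_1\cap\Gamma_2$ need only be central in the product subgroup. Granting that, collapsing the surjection $\Gamma_1\times\Gamma_2\to\Gamma_1\Gamma_2$ to an isomorphism --- that is, upgrading ``presentable by a product'' to an honest virtual direct product decomposition --- is exactly what closes the gap between the two notions and completes the equivalence.
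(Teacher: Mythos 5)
Your proof is correct and follows essentially the same route as the paper: the easy direction is the observation that a virtual direct product of infinite groups is presentable, and the converse uses exactly the paper's Lemma~3.4 (the intersection $\Gamma_1\cap\Gamma_2$ is central in $\Gamma_1\Gamma_2$ and equals the kernel of the multiplication map), applied --- as you correctly emphasise --- to the finite index subgroup $\Gamma_1\Gamma_2$ rather than to $\Gamma$ itself. The only difference is that you re-derive that lemma inline instead of citing it.
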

Before the proof of this proposition, we need another lemma.
\begin{lem}\label{l:center}
  Let $\Gamma_1$, $\Gamma_2\subset\Gamma$ be commuting subgroups with
  the property that $\Gamma_1 \cup \Gamma_2$ generates~$\Gamma$. Then
  the multiplication
  homomorphism~$\varphi\colon\Gamma_1\times\Gamma_2\longrightarrow\Gamma$
  is well-defined and surjective and the following statements hold:
\begin{enumerate}
\item the intersection~$\Gamma_1\cap \Gamma_2\subset\Gamma$ is a
  subgroup of the centre of~$\Gamma$, and
\item the kernel of~$\varphi$ is isomorphic to the Abelian
  group~$\Gamma_1\cap \Gamma_2$.
\end{enumerate}
\end{lem}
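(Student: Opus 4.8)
The plan is to verify the three claims of Lemma~\ref{l:center} in order, since each is a routine group-theoretic computation once the setup is unwound.

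First I would check that $\varphi$ is well-defined: since $\Gamma_1$ and $\Gamma_2$ commute inside $\Gamma$, the assignment $(g_1,g_2)\mapsto g_1 g_2$ respects the group operation of the direct product, because $(g_1 h_1, g_2 h_2)\mapsto g_1 h_1 g_2 h_2 = g_1 g_2 h_1 h_2$ using $h_1 g_2 = g_2 h_1$. Surjectivity is immediate from the hypothesis that $\Gamma_1\cup\Gamma_2$ generates~$\Gamma$, together with the fact that every product of elements drawn from the two commuting subgroups can be rearranged into a single element of $\Gamma_1$ times a single element of $\Gamma_2$, hence lies in the image of~$\varphi$.

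For statement~(1), I would take $g\in\Gamma_1\cap\Gamma_2$ and show it is central. Since $g\in\Gamma_2$, it commutes with every element of $\Gamma_1$ (the two subgroups commute); since $g\in\Gamma_1$, it likewise commutes with every element of $\Gamma_2$. As $\Gamma_1\cup\Gamma_2$ generates~$\Gamma$, the element $g$ commutes with a generating set and therefore with all of~$\Gamma$, so $g\in C(\Gamma)$.

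For statement~(2), I would identify $\ker\varphi$ explicitly: $(g_1,g_2)\in\ker\varphi$ iff $g_1 g_2 = e$, i.e.\ $g_2 = g_1^{-1}$, which forces $g_1\in\Gamma_1\cap\Gamma_2$. Thus the map $\ker\varphi\longrightarrow\Gamma_1\cap\Gamma_2$ sending $(g_1,g_1^{-1})\mapsto g_1$ is a bijection, and I would check it is a homomorphism; commutativity of $\Gamma_1\cap\Gamma_2$ follows from statement~(1), since a subgroup of the centre is Abelian. I do not anticipate a serious obstacle here—the only point requiring care is keeping the direct-product structure straight and confirming that the candidate inverse map is genuinely a group homomorphism rather than merely a set bijection, but this is the kind of bookkeeping that the commutativity hypothesis makes transparent.
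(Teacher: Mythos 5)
Your proposal is correct and follows essentially the same route as the paper: centrality of $\Gamma_1\cap\Gamma_2$ via commuting with the generating set $\Gamma_1\cup\Gamma_2$, and identification of $\ker\varphi$ with the anti-diagonal copy of $\Gamma_1\cap\Gamma_2$. The paper's proof is merely a terser version of the same argument.
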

\begin{proof}
  The first statement is true because the~$\Gamma_i$ generate~$\Gamma$
  and commute with each other.  So, if an element of $\Gamma$ is in
  both $\Gamma_i$, then it commutes with all generators. It follows in
  particular that $\Gamma_1\cap\Gamma_2$ is Abelian.

  For the second statement observe that
  $(g_1,g_2)\in\Gamma_1\times\Gamma_2$ maps to the neutral element
  of~$\Gamma$ if and only if $g_1=g_2^{-1}$ in~$\Gamma$. But
  $g_1\in\Gamma_1$, $g_2^{-1}\in\Gamma_2$, which
  implies~$g_1$,~$g_2\in\Gamma_1\cap\Gamma_2$. Thus, the
  anti-diagonal~$\Gamma_1 \cap \Gamma_2 \longrightarrow \Gamma$, $g
  \longmapsto (g, g^{-1})$ is an isomorphism onto the kernel
  of~$\varphi$.
\end{proof}

\begin{proof}[Proof of Proposition~\ref{p:C}]
  Suppose that such a $\Gamma$ is reducible in the sense that it has a
  finite index subgroup that is a direct product of infinite groups.
  Then obviously $\Gamma$ is presentable by a product.

  Conversely, assume that
  $\varphi\colon\Gamma_1\times\Gamma_2\longrightarrow\Gamma$ is
  surjective onto a subgroup~$\bar\Gamma\subset\Gamma$ of finite
  index, and that $\varphi(\Gamma_i)$ is infinite for both $i$. Then
  Lemma~\ref{l:center} applied to the subgroups $\varphi(\Gamma_1)$
  and $\varphi(\Gamma_2)$ of~$\bar\Gamma$ shows that $\Gamma$ must be
  reducible, because we assumed that all finite index subgroups have
  trivial centre.
\end{proof}

Sometimes it is convenient to replace a given group by a subgroup of
finite index.  This transition does not affect presentability by
products by the following straightforward observation:
\begin{lem}\label{l:finindexpres}
  Let $\Gamma$ be a group. A finite index subgroup $\bar\Gamma\subset\Gamma$ is
  presentable by a product if and only if $\Gamma$ is.
\end{lem}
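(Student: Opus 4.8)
The plan is to prove both implications directly from the definition of presentability by a product, making use of the reduction recorded in the preliminaries: after replacing each $\Gamma_i$ by its image, I may assume the factors are commuting subgroups of the ambient group and that $\varphi$ is multiplication. Throughout I will lean on two elementary facts about finite index: the intersection of two finite-index subgroups again has finite index, and the image of a finite-index subgroup under a surjective homomorphism has finite index (with index bounded by that of the original, via the induced surjection of coset spaces). With these in hand both directions are formal.

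The easy direction is that a product presentation of $\bar\Gamma$ yields one of $\Gamma$. Given commuting infinite subgroups $\Gamma_1,\Gamma_2\subset\bar\Gamma$ whose product $\Gamma_1\Gamma_2$ has finite index in $\bar\Gamma$, I would observe that $\Gamma_1\Gamma_2$ then has finite index in $\Gamma$ by transitivity of the index. Since $\Gamma_1$ and $\Gamma_2$ still commute inside $\Gamma$ and remain infinite, composing the multiplication map with the inclusion $\bar\Gamma\hookrightarrow\Gamma$ exhibits $\Gamma$ as presentable by a product.

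For the converse I would start from commuting infinite subgroups $\Gamma_1,\Gamma_2\subset\Gamma$ with $\Gamma_1\Gamma_2$ of finite index in $\Gamma$, and cut them down to $\bar\Gamma$ by setting $A_i:=\Gamma_i\cap\bar\Gamma$. Each $A_i$ has finite index in $\Gamma_i$, hence is infinite, and $A_1$, $A_2$ still commute and lie in $\bar\Gamma$. The crux is to check that $A_1 A_2$ has finite index in $\bar\Gamma$: the product $A_1\times A_2$ has finite index in $\Gamma_1\times\Gamma_2$, and since the multiplication map $\Gamma_1\times\Gamma_2\longrightarrow\Gamma_1\Gamma_2$ is surjective, its image $A_1 A_2$ has finite index in $\Gamma_1\Gamma_2$, which in turn has finite index in $\Gamma$. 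As $A_1 A_2\subset\bar\Gamma$, it then has finite index in $\bar\Gamma$ as well, so the multiplication map $A_1\times A_2\longrightarrow\bar\Gamma$ presents $\bar\Gamma$ as a product.

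The only real subtlety is this finite-index bookkeeping in the hard direction — in particular the step that passing to the image under the multiplication map preserves finite index — and beyond carefully tracking which ambient subgroup each index is measured against, I expect no genuine obstacle.
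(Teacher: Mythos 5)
Your proof is correct, and it fills in exactly the ``straightforward observation'' that the paper leaves unproved: the easy direction by composing with the inclusion $\bar\Gamma\hookrightarrow\Gamma$, and the converse by intersecting the factors with $\bar\Gamma$ and using that images of finite-index subgroups under surjections have finite index. The finite-index bookkeeping you flag is handled correctly, so there is nothing to add.
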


\subsection*{Hyperbolic groups}

We now show that most hyperbolic groups are not presentable by
products, which is case (H) of Theorem~\ref{t:alg}. The following
lemma is probably well known, but we did not find it explicitly in the
literature.

  \begin{lem}\label{hypcentrelem}
    Let $\Gamma$ be a hyperbolic group that is not virtually cyclic.
    Then the centre $C(\Gamma)$ is finite.
  \end{lem}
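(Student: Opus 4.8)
The plan is to exploit the dichotomy between elementary and non-elementary hyperbolic groups, together with the fact that infinite-order elements have small (virtually cyclic) centralisers. First I would observe that, since finite groups are trivially virtually cyclic (the trivial subgroup is cyclic of finite index), the hypothesis that $\Gamma$ is not virtually cyclic forces $\Gamma$ to be non-elementary: it is infinite and not two-ended, so its Gromov boundary $\partial\Gamma$ is infinite, in fact perfect, and $\Gamma$ acts on $\partial\Gamma$ minimally, with no finite invariant subset.

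Next I would reduce to a statement about a single central element. Assume for contradiction that $C(\Gamma)$ is infinite. A standard feature of hyperbolic groups is that torsion elements have bounded order and finite subgroups are finite, so every infinite subgroup contains an element of infinite order; applying this to $C(\Gamma)$ produces a central element $g$ of infinite order. Since $g$ is central, every element of $\Gamma$ commutes with $g$, so that $C_\Gamma(g)=\Gamma$.

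The \emph{crux} is then to show that $C_\Gamma(g)$ is virtually cyclic, which contradicts the hypothesis on $\Gamma$; this is where the geometry enters, and I expect it to be the main obstacle. An infinite-order element $g$ acts on the Cayley graph as a hyperbolic isometry with a quasi-axis, whose two endpoints $g^{+\infty},g^{-\infty}\in\partial\Gamma$ are the unique fixed points of $g$ on the boundary. If $h$ commutes with $g$, then $hgh^{-1}=g$ forces $h$ to permute the fixed-point set $\{g^{+\infty},g^{-\infty}\}$; hence $C_\Gamma(g)$ preserves this two-point set. The stabiliser of such a pair of boundary points acts properly on the quasi-line joining them, and properness together with the coarse uniqueness of geodesics between two fixed endpoints forces this stabiliser to be two-ended, hence virtually cyclic.

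Finally I would tie the threads together: because $g$ is central, $\Gamma=C_\Gamma(g)$, which has just been shown to be virtually cyclic, contradicting the hypothesis. Alternatively, one can bypass centralisers entirely, since the pair $\{g^{+\infty},g^{-\infty}\}$ is then a finite $\Gamma$-invariant subset of $\partial\Gamma$, impossible for a non-elementary group by minimality of the boundary action. In either route the delicate point is the same genuinely geometric input on hyperbolic groups — the classification of isometries, the existence of boundary fixed points for infinite-order elements, and the virtual cyclicity of the stabiliser of a boundary point-pair — so the substance of the argument lies in establishing or citing these facts cleanly, after which the deduction is immediate.
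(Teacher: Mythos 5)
Your overall strategy is the right one and is close to the paper's: everything hinges on the fact that the centraliser of an infinite-order element of a hyperbolic group is virtually cyclic (the paper simply cites Bridson--Haefliger, Corollary~3.10, for this; your boundary-dynamics sketch of it is a correct outline of why it holds). However, there is one step in your reduction whose justification does not hold up. You argue: ``torsion elements have bounded order and finite subgroups are finite, so every infinite subgroup contains an element of infinite order,'' and you apply this to the infinite subgroup $C(\Gamma)$. That inference is not valid: a bound on the orders of torsion elements does not by itself rule out an infinite torsion subgroup (an infinite elementary abelian $2$-group has exponent $2$). The statement you actually need --- that every infinite subgroup of a hyperbolic group contains an element of infinite order, equivalently that torsion subgroups of hyperbolic groups are finite --- is true, but it is a genuine theorem (it follows, e.g., from the classification of subgroups of hyperbolic groups as elementary or containing a free group of rank two), not a formal consequence of bounded torsion. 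Note also that $C(\Gamma)$ need not itself be hyperbolic, so you cannot simply invoke the standard fact that an infinite hyperbolic \emph{group} contains an infinite-order element.

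The paper's proof is arranged precisely so as to avoid this heavier input. It argues in two steps: first, no central element can have infinite order, since for such a $g$ one would have $\Gamma=C_\Gamma(g)$ virtually cyclic; hence $C(\Gamma)$ is a torsion group. Second, since $\Gamma$ is infinite it contains \emph{some} element $g$ of infinite order (this is the standard fact for the ambient hyperbolic group itself), and $C(\Gamma)\subset C_\Gamma(g)$, which is virtually cyclic; a torsion subgroup of a virtually cyclic group is finite. If you replace your flawed inference by a citation of the finiteness of torsion subgroups, your contradiction argument goes through; but the cleaner route is the paper's, which locates the infinite-order element in $\Gamma$ rather than in $C(\Gamma)$. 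Your alternative ending --- that $\{g^{+\infty},g^{-\infty}\}$ would be a finite $\Gamma$-invariant subset of the boundary of a non-elementary group --- is a correct variant once a central infinite-order element is in hand, but it inherits the same gap.
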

  \begin{proof}
    Suppose the centre~$C(\Gamma)$ contains a non-trivial element~$g$.
    By definition, the centraliser~$C_{\Gamma}(g)$ of such a $g$ is
    the whole group~$\Gamma$. If $g$ had infinite order, then
    $C_{\Gamma}(g)=\Gamma$ would be virtually
    cyclic~\cite[Corollary~3.10 on~p.~462]{bh}, contradicting the
    hypothesis on~$\Gamma$. Hence, all elements of~$C(\Gamma)$ have
    finite order.

    As $\Gamma$ is not virtually cyclic, $\Gamma$ must be infinite;
    thus, $\Gamma$ contains an element~$g$ of infinite
    order~\cite[Proposition~2.22 on~p.~458]{bh}. The torsion
    group~$C(\Gamma)$ is a subgroup of the
    centraliser~$C_{\Gamma}(g)$, which is virtually cyclic. So
    $C(\Gamma)$ is finite.
  \end{proof}

\begin{prop}\label{hypgroupprop}
  A hyperbolic group that is not virtually cyclic is not presentable
  by a product.
\end{prop}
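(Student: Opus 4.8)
The plan is to argue by contradiction, exploiting the same geometric input used in the proof of Lemma~\ref{hypcentrelem}: that an element of infinite order in a hyperbolic group has virtually cyclic centraliser. So suppose that $\Gamma$ is hyperbolic, not virtually cyclic, yet \emph{is} presentable by a product. Negating the definition, there is then a homomorphism $\varphi\colon\Gamma_1\times\Gamma_2\longrightarrow\Gamma$ onto a subgroup of finite index both of whose factors have infinite image. Following the convention of the Preliminaries, I would record these images as commuting subgroups $A,B\subset\Gamma$, each infinite, whose product $AB$ has finite index in $\Gamma$.

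The first substantive step is to find an element of infinite order in each factor. Since an infinite subgroup of a hyperbolic group cannot be torsion (torsion subgroups being finite), $A$ contains some $a$ of infinite order and $B$ contains some $b$ of infinite order. Because $A$ and $B$ commute elementwise, every element of $B$ commutes with $a$, so $B\subset C_{\Gamma}(a)$, and likewise $A\subset C_{\Gamma}(b)$. By \cite[Corollary~3.10 on~p.~462]{bh} the centralisers $C_{\Gamma}(a)$ and $C_{\Gamma}(b)$ are virtually cyclic, whence both $A$ and $B$ are infinite virtually cyclic groups.

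The final step converts this into a contradiction with the hypothesis. Choosing infinite cyclic subgroups $\langle a_0\rangle\subset A$ and $\langle b_0\rangle\subset B$ of finite index and using that $A$ and $B$ commute, the subgroup $\langle a_0\rangle\langle b_0\rangle=\langle a_0,b_0\rangle$ is abelian, two-generated, and of finite index in $AB$, hence in $\Gamma$. Thus $\Gamma$ is virtually abelian; but a virtually abelian hyperbolic group must be virtually cyclic, since a finite-index subgroup isomorphic to $\Z^k$ with $k\geq 2$ would contain a copy of $\Z^2$, impossible inside a hyperbolic group. This contradicts the assumption that $\Gamma$ is not virtually cyclic, and the argument closes. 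The main obstacle is the passage from ``both factors infinite'' to ``both factors virtually cyclic'': the delicate point is guaranteeing that each infinite factor genuinely contains an infinite-order element, as an arbitrary subgroup of a hyperbolic group need not itself be visibly hyperbolic; once the small-centraliser property of \cite{bh} is brought to bear, the remainder is routine bookkeeping with finite-index subgroups.
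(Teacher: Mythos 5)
Your argument is correct in outline and rests on the same geometric input as the paper's proof, namely that centralisers of infinite-order elements in a hyperbolic group are virtually cyclic \cite[Corollary~3.10 on~p.~462]{bh}. But it diverges at the one step you yourself flag as delicate, and there the divergence matters. You produce infinite-order elements in \emph{both} factors $A$ and $B$ by asserting that torsion subgroups of hyperbolic groups are finite. That statement is true, but it is a nontrivial theorem going back to Gromov (essentially a consequence of the Tits alternative for subgroups of hyperbolic groups), and it is much deeper than anything else you use: $A$ and $B$ need not be finitely generated, let alone quasi-convex or hyperbolic, so you cannot reduce to \cite[Proposition~2.22 on~p.~458]{bh}, which concerns the ambient hyperbolic group itself. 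The paper avoids this input entirely by an elementary two-stage trick: since $\Gamma$ is an infinite hyperbolic group it contains an element $g=g_1g_2$ of infinite order with $g_1\in A$ and $g_2\in B$ commuting, and $(g_1g_2)^n=g_1^ng_2^n$ forces one of $g_1$, $g_2$ --- say $g_1\in A$ --- to have infinite order; then $B\subset C_\Gamma(g_1)$ is virtually cyclic, and an infinite subgroup of a virtually cyclic group visibly contains an infinite-order element. If you want the proof to rest only on the cited facts, replace your appeal to the finiteness of torsion subgroups by this argument.

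Your endgame is a genuine and slightly cleaner variant. Having made both $A$ and $B$ infinite virtually cyclic, you pass to commuting infinite cyclic finite-index subgroups and conclude that $\Gamma$ is virtually a two-generated abelian group, hence virtually cyclic because $\Z\times\Z$ cannot embed in a hyperbolic group; this contradicts the hypothesis directly and covers uniformly the degenerate case in which the two cyclic subgroups essentially coincide. The paper instead must certify that its two infinite-order elements generate an honest copy of $\Z\times\Z$, and for that it invokes Lemma~\ref{l:center} and Lemma~\ref{hypcentrelem} to show that $\Gamma_1\cap\Gamma_2$ is finite. Your route makes that centre argument unnecessary; both endgames are sound.
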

\begin{proof}
  Let $\Gamma$ be a hyperbolic group that is not virtually cyclic and
  suppose that $\varphi \colon\Gamma_1 \times\Gamma_2
  \longrightarrow\Gamma$ is a homomorphism onto a finite index
  subgroup.  Because finite index subgroups of hyperbolic groups are
  hyperbolic, we may assume that $\varphi$ is surjective.
  Furthermore, we may assume without loss of generality that
  the~$\Gamma_i$ are subgroups of~$\Gamma$ and that $\varphi$ is just
  the multiplication map.
 
  By Lemma~\ref{l:center} the intersection $\Gamma_1\cap\Gamma_2$ is
  contained in the centre of~$\Gamma$, and is therefore finite by
  Lemma~\ref{hypcentrelem}.

  Because $\Gamma$ is not finite, it contains an element of infinite
  order~\cite[Proposition~2.22 on~p.~458]{bh}. Therefore we may assume
  that $\Gamma_1$ contains an element~$g_1$ of infinite order. The
  group~$\Gamma_2$ is contained in~$C_{\Gamma}(g_1)$ and hence is
  virtually cyclic~\cite[Corollary~3.10 on~p.~462]{bh}.

  Now assume that $\Gamma_2$ is also infinite. Then $\Gamma_2$
  contains an element~$g_2$ of infinite order (because the
  group~$\Gamma_2$ is virtually cyclic). Now $g_1$ and $g_2$ generate
  a copy of~$\bZ \times \bZ$ in~$\Gamma$. For if this were false, then
  a power of~$g_1$ would equal a power of~$g_2$, and these powers
  would be contained in the finite group~$\Gamma_1\cap \Gamma_2$; this
  is not possible because the~$g_i$ have infinite order.  However,
  hyperbolic groups cannot contain~$\bZ \times \bZ$ because
  centralisers of elements of infinite order are virtually cyclic.
  This contradiction shows that one of the $\Gamma_i$ must be finite.
\end{proof}

\subsection*{Fundamental groups of non-positively curved manifolds}
      
The following proposition corresponds to the case~(N-P) of
Theorem~\ref{t:alg}.
\begin{prop}\label{p:NP}
  If $\Gamma$ is the fundamental group of a closed Riemannian
  manifold~$N$ of non-positive sectional curvature of rank one and of
  dimension $\geq 2$, then $\Gamma$ is not presentable by a product.
  \end{prop}
\begin{proof}
  Recall that by the result of Ballmann--Eberlein~\cite{BE} the
  geometric rank of~$N$ coincides with the algebraic rank of~$\Gamma$,
  that this rank is additive under direct products of manifolds
  respectively of groups, and that it is invariant under passage to
  finite coverings respectively to finite index subgroups. The
  assumption that $N$ be of rank one therefore implies that $N$ is
  locally irreducible and that $\Gamma$ is irreducible (any direct
  factor of~$\Gamma$ would be infinite because the group is
  torsion-free).

  The irreducibility of~$N$ and the assumption~$\dim N\geq 2$ imply
  that $N$ has no Euclidean local de Rham factor, so that by the
  result of Eberlein~\cite[p.~210f]{EbJDG}, the centres of~$\Gamma$
  and of all its finite index subgroups are trivial. Now the
  conclusion follows from Proposition~\ref{p:C}.
\end{proof}

We will generalise this Proposition in the course of the proof of
Theorem~\ref{t:N-P} in Section~\ref{s:NP}.

\subsection*{Mapping class groups}

Finally we deal with the case (MCG) in Theorem~\ref{t:alg}. We refer
the reader to Ivanov's book~\cite{MCG} for the relevant information on
mapping class groups and their subgroups.

A mapping class is called irreducible if it does not fix any
non-trivial isotopy class of a curve on the surface.  The same
terminology is applied to subgroups of the mapping class group.  Thus
a subgroup is said to be irreducible if there is no isotopy class of a
curve on the surface fixed by the whole subgroup.
\begin{prop}\label{p:MCG}
  Mapping class groups of closed, oriented surfaces of genus at
  least~$1$ are not presentable by products.
\end{prop}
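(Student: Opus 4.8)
The plan is to follow the same line of argument used for hyperbolic groups in Proposition~\ref{hypgroupprop}, replacing "element of infinite order" by a suitable analogue and "virtually cyclic centraliser" by the structure theory of centralisers in mapping class groups. Let $\Gamma$ denote the mapping class group of a closed oriented surface of genus $g\geq 1$. First I would dispose of the genus-one case separately, where $\Gamma$ is commensurable with $SL_2(\Z)$ (which is virtually free, hence virtually a nonabelian free group for $g=1$ in the closed case, or one reduces directly), so I will focus on $g\geq 2$. By Lemma~\ref{l:finindexpres} it suffices to rule out presentability after passing to a finite index subgroup, and by Lemma~\ref{l:center} the key point will again be to understand when two infinite commuting subgroups $\Gamma_1,\Gamma_2\subset\Gamma$ can generate a finite index subgroup.

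The central tool is the Nielsen--Thurston classification together with Ivanov's results on centralisers. The strategy is: assuming $\varphi\colon\Gamma_1\times\Gamma_2\longrightarrow\Gamma$ surjects onto a finite index subgroup $\bar\Gamma$ with both $\Gamma_i$ infinite, I first replace $\bar\Gamma$ by $\Gamma$ (finite index subgroups of mapping class groups are again mapping class groups in the relevant sense, or at least inherit the needed properties) and take the $\Gamma_i$ to be commuting subgroups of $\Gamma$ by the preliminary reduction. Then I would produce a pseudo-Anosov element: since $\Gamma$ is infinite and not virtually abelian, and since $\Gamma_1$ is infinite, I would argue that $\Gamma_1$ must contain a pseudo-Anosov element $f_1$, or else $\Gamma_1$ is reducible and fixes some isotopy class of curves. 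The centraliser in $\Gamma$ of a pseudo-Anosov element is virtually cyclic (it is the stabiliser of the pair of its invariant foliations, which is virtually $\Z$). Since $\Gamma_2$ commutes with $f_1$, this forces $\Gamma_2\subset C_\Gamma(f_1)$ to be virtually cyclic, hence to contain its own pseudo-Anosov power $f_2$; but then $f_1$ and $f_2$ are commuting pseudo-Anosov elements generating a group whose intersection $\Gamma_1\cap\Gamma_2$ is central and (being virtually cyclic in two independent directions) too large, contradicting that commuting pseudo-Anosovs share the same axis.

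The main obstacle, and the reason this is not a verbatim copy of the hyperbolic argument, is that an arbitrary infinite subgroup $\Gamma_1$ need not contain any pseudo-Anosov element: it might consist entirely of reducible and finite-order classes. So I expect the hard part to be the dichotomy step. I would resolve this using the theory of canonical reduction systems: an infinite subgroup that contains no pseudo-Anosov element is itself reducible, i.e.\ it fixes a nonempty isotopy class of disjoint simple closed curves (equivalently has a nonempty canonical reduction system), by Ivanov's subgroup classification. If \emph{both} $\Gamma_1$ and $\Gamma_2$ are reducible, I would need their canonical reduction systems to be compatible: because $\Gamma_1$ and $\Gamma_2$ commute, each $\Gamma_i$ preserves the canonical reduction system of the other, and one shows that $\langle\Gamma_1,\Gamma_2\rangle=\Gamma$ then also fixes a nonempty isotopy class of curves, contradicting that the \emph{full} mapping class group is irreducible (it certainly does not fix any curve). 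Thus at least one factor contains a pseudo-Anosov, and the argument of the previous paragraph applies. Throughout I would lean on Ivanov's book~\cite{MCG} for the facts that centralisers of pseudo-Anosov elements are virtually cyclic, that reducible subgroups have well-defined canonical reduction systems respected by commuting elements, and that two commuting pseudo-Anosov classes have proportional powers.
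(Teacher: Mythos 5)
Your proposal is correct and follows essentially the same route as the paper: genus one is dispatched via hyperbolicity of $\mathrm{SL}_2(\Z)$, and for genus $\geq 2$ the crux is that each factor contains a pseudo-Anosov element whose (virtually) cyclic centraliser contains the other factor. The one place where the paper is slicker is the production of the pseudo-Anosov: since $\Gamma_1$ and $\Gamma_2$ commute and generate, each is \emph{normal} in $\Gamma$, so one can quote Ivanov's results that an infinite normal subgroup of an irreducible group is irreducible and that infinite irreducible subgroups contain pseudo-Anosov elements; your canonical-reduction-system dichotomy is in effect a re-derivation of that corollary, so nothing is lost, but you could save yourself the "hard part" by observing normality. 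Two small points to tidy: first, when you extract a pseudo-Anosov $f_2\in\Gamma_2$ from the virtually cyclic group $C_\Gamma(f_1)$, you should say explicitly that every infinite-order element of $C_\Gamma(f_1)$ is pseudo-Anosov (a power of it equals a power of $f_1$, and roots of pseudo-Anosovs are pseudo-Anosov) -- an arbitrary virtually cyclic subgroup of the mapping class group need not contain one. Second, your closing sentence does not actually state a contradiction ("contradicting that commuting pseudo-Anosovs share the same axis" is the mechanism, not the absurdity); the clean finish is either the paper's -- both $\Gamma_i$ are cyclic, so $\Gamma$ is virtually Abelian, which is absurd for a mapping class group -- or yours made precise: a common power $f_1^m=f_2^k$ lies in $\Gamma_1\cap\Gamma_2$, which is central by Lemma~\ref{l:center}, so $\Gamma=C_\Gamma(f_1^m)$ would be virtually cyclic, contradicting the hypothesis on $\Gamma$.
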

\begin{proof}
  In genus one the mapping class group is $\mathrm{SL}_2(\Z)$. This
  group is virtually free and hence hyperbolic.  Thus we may appeal to
  Proposition~\ref{hypgroupprop} to cover this case.

  Now consider the mapping class group of a closed surface of
  genus~$\geq 2$.  After passing to a subgroup of finite index, we may
  assume that we are dealing with a torsion-free group $\Gamma$
  (compare Lemma~\ref{l:finindexpres}).  Assume that $\Gamma_1$,
  $\Gamma_2\subset\Gamma$ are non-trivial commuting subgroups for
  which the
  multiplication~$\Gamma_1\times\Gamma_2\longrightarrow\Gamma$ is
  surjective.

  As each $\Gamma_i$ is infinite and normal in the irreducible
  group~$\Gamma$, it is itself irreducible~\cite[Corollary~7.13]{MCG}.
  It follows that each~$\Gamma_i$ contains a pseudo-Anosov
  element~\cite[Corollary~7.14]{MCG}.  Now if $g_1\in\Gamma_1$ is
  pseudo-Anosov, then the centraliser $C_{\Gamma}(g_1)$ of $g_1$ in
  $\Gamma$ is cyclic~\cite[Lemma~8.13]{MCG} and contains~$\Gamma_2$.
  Thus $\Gamma_2$ is cyclic. Reversing the r\^oles of~$\Gamma_1$ and
  $\Gamma_2$ we see that $\Gamma_1$ is cyclic as well. Thus we reach
  the absurd conclusion that the mapping class group is virtually
  Abelian.  This contradiction shows that the mapping class group is
  not presentable by a product.
\end{proof}

\subsection*{Group extensions}

Having completed the proof of Theorem~\ref{t:alg}, we now extend our
results to other groups and manifolds.  In this direction, we will use
the following result to study fibrations in Section~\ref{s:fibre}:
\begin{prop}\label{p:fibre}
  Assume that the group~$\Gamma$ fits into an extension of the form
\begin{equation}\label{ext}
  1\longrightarrow K\longrightarrow \Gamma\stackrel{\pi}{\longrightarrow} Q\longrightarrow 1 \ ,
\end{equation}
where both $K$ and $Q$ are torsion-free, non-trivial and not
presentable by products. Then $\Gamma$ is presentable by a product if
and only if it has a finite index subgroup that is a direct product of
finite index subgroups of $K$ and $Q$.
\end{prop}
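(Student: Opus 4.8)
The plan is to prove the two implications separately; the reverse (``if'') direction is immediate, while the forward direction requires a two-stage reduction that uses the hypotheses on $Q$ and on $K$ one at a time. For the reverse direction, suppose $\Gamma$ has a finite index subgroup $\bar\Gamma\cong K'\times Q'$ with $K'\subset K$ and $Q'\subset Q$ of finite index. Since $K$ and $Q$ are torsion-free and non-trivial they are infinite, hence so are their finite index subgroups $K'$ and $Q'$. Thus $\bar\Gamma$ is a direct product of two infinite groups and is therefore presentable by a product, so by Lemma~\ref{l:finindexpres} the same holds for $\Gamma$.

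For the forward direction I would follow the convention of the preliminaries and start from commuting infinite subgroups $\Gamma_1,\Gamma_2\subset\Gamma$ whose product $\bar\Gamma=\Gamma_1\Gamma_2$ has finite index, the goal being to peel off $Q$ and then $K$. First I project along $\pi$: the subgroups $\pi(\Gamma_1)$ and $\pi(\Gamma_2)$ commute and generate $\pi(\bar\Gamma)$, which has finite index in $Q$, so multiplication presents a finite index subgroup of $Q$. As $Q$ is not presentable by a product, one factor, say $\pi(\Gamma_2)$, has finite image; because $Q$ is torsion-free this image is trivial, whence $\Gamma_2\subset K$.

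Next I intersect with $K$. A short computation gives $K\cap\bar\Gamma=(\Gamma_1\cap K)\,\Gamma_2$, a subgroup of finite index in $K$, and the two commuting subgroups $\Gamma_1\cap K$ and $\Gamma_2$ of $K$ again present it as a product. Since $K$ is not presentable by a product and $\Gamma_2$ is infinite, the remaining factor $\Gamma_1\cap K$ must be trivial. Consequently $\pi$ restricts to an isomorphism of $\Gamma_1$ onto $Q':=\pi(\Gamma_1)=\pi(\bar\Gamma)$, which is of finite index in $Q$, while $K':=\Gamma_2=K\cap\bar\Gamma$ is of finite index in $K$. Finally $\Gamma_1\cap\Gamma_2\subset\Gamma_1\cap K$ is trivial, so $\bar\Gamma=\Gamma_1\Gamma_2$ is an internal direct product $\Gamma_1\times\Gamma_2\cong Q'\times K'$, which is the desired finite index subgroup.

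I expect the proof to be essentially bookkeeping rather than to present a serious obstacle: the two crucial points are that in the first stage the vanishing factor is forced into $K$ (where torsion-freeness of $Q$ upgrades ``finite'' to ``trivial''), and that in the second stage $\Gamma_2$ exhausts $K\cap\bar\Gamma$ while $\Gamma_1$ embeds isomorphically into $Q$. The identity $K\cap\bar\Gamma=(\Gamma_1\cap K)\,\Gamma_2$ and the various finite-index claims are routine verifications that I would not belabour.
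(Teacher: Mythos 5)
Your proof is correct and follows essentially the same route as the paper: project along $\pi$ and use non-presentability together with torsion-freeness of $Q$ to force one factor into $K$, then use non-presentability together with torsion-freeness of $K$ to show the other factor meets $K$ trivially, yielding the internal direct product. The only differences are cosmetic (the labelling of the factors, and your explicit identity $K\cap\bar\Gamma=(\Gamma_1\cap K)\,\Gamma_2$, which the paper uses implicitly after replacing $\Gamma$, $K$, $Q$ by the corresponding finite index subgroups).
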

\begin{proof}
  One direction is clear: if $\Gamma$ has a finite index subgroup that
  is a direct product of non-trivial torsion-free groups, then it is
  presented by a product.

  For the converse, let
  $\varphi\colon\Gamma_1\times\Gamma_2\longrightarrow\Gamma$ be
  surjective onto a finite index subgroup.  After replacing $\Gamma$
  by this subgroup (and suitably replacing $K$ and $Q$ by the
  corresponding finite index subgroups), we may assume that $\varphi$
  is surjective. As before, we may also assume that the $\Gamma_i$ are
  subgroups of $\Gamma$.  Assume that they are both non-trivial.

  Now consider
  $\pi\circ\varphi\colon\Gamma_1\times\Gamma_2\longrightarrow Q$. This
  is surjective.  As $Q$ is torsion-free and not presentable by a
  product, one of the $\Gamma_i$ has trivial image.  Let us assume
  that this is $\Gamma_1$. Then by the exactness of~\eqref{ext}, we
  have $\Gamma_1\subset K$.

  If $\Gamma_1$ and $K\cap\Gamma_2$ are both infinite, then we have a
  contradiction with the assumption that $K$ is not presentable by a
  product. So one of these groups is finite. As $K$ is torsion-free,
  such a finite subgroup must be trivial. But $\Gamma_1$ is
  non-trivial by assumption, so we conclude that $K\cap\Gamma_2$ is
  trivial, and so $\pi$ maps $\Gamma_2$ injectively onto $Q$.

  We now claim that
  $\varphi\colon\Gamma_1\times\Gamma_2\longrightarrow\Gamma$ is an
  isomorphism.  For if $\varphi (g_1,g_2)$ is trivial, then so
  is~$\pi(\varphi (g_1,g_2)) = (\pi\vert\Gamma_2)(g_2)$, which implies
  that $g_2$ is trivial. Hence, the triviality of~$\varphi
  (g_1,g_2)=\varphi (g_1,e)$ shows that $g_1$ is trivial
  in~$\Gamma_1$.
\end{proof}

As an immediate application, we can extend Proposition~\ref{p:MCG} to
mapping class groups of surfaces with a marked point. Here we consider
diffeomorphisms of a surface fixing the marked point, up to isotopies
fixing the point.
\begin{cor}\label{c:mp}
  The mapping class group of a closed, oriented surface of genus $\geq
  2$ with a marked point is not presentable by a product.
\end{cor}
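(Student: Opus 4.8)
The plan is to apply Proposition~\ref{p:fibre} to the Birman exact sequence, which realises the mapping class group of a surface with a marked point as an extension of the ordinary mapping class group. Let $S$ be a closed oriented surface of genus $\geq 2$, let $\mathrm{Mod}(S,*)$ denote the mapping class group with one marked point, and let $\mathrm{Mod}(S)$ denote the ordinary mapping class group. The Birman exact sequence provides a short exact sequence
\begin{equation*}
  1\longrightarrow \pi_1(S)\longrightarrow \mathrm{Mod}(S,*)\longrightarrow \mathrm{Mod}(S)\longrightarrow 1 \ ,
\end{equation*}
where the kernel $K=\pi_1(S)$ is the point-pushing subgroup and the quotient $Q=\mathrm{Mod}(S)$ is the mapping class group of the unmarked surface. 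This is the extension~\eqref{ext} to which I would like to apply Proposition~\ref{p:fibre}.

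To invoke that proposition, I would first verify its hypotheses. The kernel $K=\pi_1(S)$ is the fundamental group of a closed hyperbolic surface, hence torsion-free, non-trivial, and (being a non-elementary hyperbolic group) not presentable by a product by Proposition~\ref{hypgroupprop}. The quotient $Q=\mathrm{Mod}(S)$ is non-trivial and not presentable by a product by Proposition~\ref{p:MCG}; after passing to a suitable finite index subgroup it is torsion-free, and by Lemma~\ref{l:finindexpres} this does not affect presentability by products, so I may arrange torsion-freeness of $Q$ (and correspondingly of $\Gamma=\mathrm{Mod}(S,*)$ and $K$). With these hypotheses in hand, Proposition~\ref{p:fibre} tells me that $\Gamma$ is presentable by a product if and only if it has a finite index subgroup splitting as a direct product of finite index subgroups of $K$ and $Q$.

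It therefore remains to rule out such a splitting, and this is where I expect the main obstacle to lie. The point is to show that $\mathrm{Mod}(S,*)$ has \emph{no} finite index subgroup isomorphic to a direct product of a finite index subgroup of $\pi_1(S)$ with a finite index subgroup of $\mathrm{Mod}(S)$. The cleanest way to see this is to observe that such a product splitting would force the extension to be virtually a direct product, in particular the point-pushing subgroup $\pi_1(S)$ would virtually admit a complement that is normalised trivially. I would exploit the fact that $\mathrm{Mod}(S)$ acts on $\pi_1(S)$ through its natural (outer) action, and that this action is highly non-trivial: no finite index subgroup of $\mathrm{Mod}(S)$ acts trivially on a finite index subgroup of $\pi_1(S)$, since the image of $\mathrm{Mod}(S)$ in $\mathrm{Out}(\pi_1(S))$ is of finite index (indeed all of it, by Dehn--Nielsen--Baer) and any finite index subgroup still acts with non-trivial outer action. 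A direct product decomposition would make the conjugation action of the $Q$-factor on the $K$-factor inner, contradicting this non-triviality.

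Concretely, the argument I would record is as follows: suppose $\Gamma$ had a finite index subgroup $\Gamma'=K'\times Q'$ with $K'\subset\pi_1(S)$ and $Q'\subset\mathrm{Mod}(S)$ of finite index. Then $Q'$ would centralise $K'$ inside $\Gamma$, so that every element of $Q'$ would commute with every element of the point-pushing subgroup $K'$. But the centraliser of the point-pushing subgroup in $\mathrm{Mod}(S,*)$ is trivial (a mapping class commuting with all point-pushes is forced to act trivially, hence is the identity), which would make $Q'$ trivial, contradicting that $Q'$ has finite index in the infinite group $\mathrm{Mod}(S)$. This centraliser computation is the crux, and I would phrase it using the semidirect-product structure of the Birman sequence together with the non-triviality of the $\mathrm{Mod}(S)$-action on $\pi_1(S)$; once it is in place, Proposition~\ref{p:fibre} closes the argument and shows that $\mathrm{Mod}(S,*)$ is not presentable by a product.
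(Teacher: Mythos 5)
Your proposal is correct and follows essentially the same route as the paper: the Birman exact sequence combined with Proposition~\ref{p:fibre}, after passing to a torsion-free finite index subgroup of the quotient. The only difference is that you actually substantiate the final non-splitting step with a centraliser argument (using that no element of $\mathrm{Mod}(S,*)$ acts as the identity on a finite index subgroup of the point-pushing subgroup $\pi_1(S)$, by Dehn--Nielsen--Baer), whereas the paper simply asserts that the universal extension remains non-trivial on all finite index subgroups --- so your write-up is, if anything, more complete on that point.
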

\begin{proof}
  Let $\Gamma$ be the mapping class group of a closed, oriented
  surface~$\Sigma$ with respect to a marked point.  We have an exact
  sequence
\begin{equation}\label{eq:mp}
  1\longrightarrow\pi_1(\Sigma)\longrightarrow \Gamma\stackrel{\pi}{\longrightarrow} Q\longrightarrow 1 \ ,
\end{equation}
where $Q$ is the mapping class group of $\Sigma$ (without a marked
point) considered in Proposition~\ref{p:MCG}, and $\pi$ is the
forgetful map.  We may pull back this extension to a torsion-free
finite index subgroup of $Q$, so that the assumption on the quotient
in Proposition~\ref{p:fibre} is satisfied. Note that the kernel is the
fundamental group of a closed manifold of negative curvature, and so
is not presentable by a product, for example by
Proposition~\ref{hypgroupprop}.  The conclusion follows from
Proposition~\ref{p:fibre}, because the (universal)
extension~\eqref{eq:mp} is non-trivial when restricted to any finite
index subgroups of $\pi_1(\Sigma)$ and of
$Q=\mathrm{Out}(\pi_1(\Sigma))$.
\end{proof}

\section{Manifolds of non-positive curvature}\label{s:NP}

In this section we discuss the domination of closed manifolds of
non-positive sectional curvature by products.  The combination of
Theorems~\ref{t:top} and \ref{t:alg} shows that $P\ngeq N$ whenever
$P$ is a non-trivial product of closed manifolds and $N$ is a
non-positively curved closed Riemannian manifold of rank one; this
includes in particular all closed Riemannian manifolds of negative
sectional curvature. Most non-positively curved manifolds are of rank
one, even when they have a great deal of zero
curvature~\cite{BBE,BE,FS}, but it remains to deal with the general
case. The following result shows that domination by a product is only
possible for a non-positively curved manifold if it is itself
virtually a product.

\begin{thm}\label{t:N-P}
  Let $N$ be a closed, oriented, connected Riemannian manifold with
  non-positive sectional curvature and $\Gamma$ its fundamental group.
  If the dimension of $N$ is at least two, then the following
  properties are equivalent:
\begin{enumerate}
\item $P\geq N$ for some non-trivial product~$P$ of closed, oriented
  manifolds,
\item the fundamental group $\Gamma$ is presentable by a product,
\item some finite index subgroup of~$\Gamma$ splits as a non-trivial direct product,
\item there is a finite covering of~$N$ diffeomorphic to a non-trivial
  product~$N_1\times N_2$ of closed, oriented manifolds~$N_i$.
\end{enumerate}
\end{thm}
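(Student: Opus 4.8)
\[
(4)\Rightarrow(1)\Rightarrow(2)\Rightarrow(3)\Rightarrow(4),
\]
so that all four properties become equivalent. The implications $(4)\Rightarrow(1)$ and $(3)\Rightarrow(2)$ are essentially formal and I would dispatch them first. For $(4)\Rightarrow(1)$, a finite covering $N_1\times N_2\longrightarrow N$ is a map of non-zero degree from a non-trivial product, so $P\geq N$ with $P=N_1\times N_2$. For $(3)\Rightarrow(2)$, a finite index subgroup that splits as a non-trivial direct product of (necessarily infinite, since $\Gamma$ is torsion-free) factors is presentable by a product, and by Lemma~\ref{l:finindexpres} so is $\Gamma$ itself.

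\textbf{The implication $(1)\Rightarrow(2)$} is where I would invoke the machinery of the first two sections. A closed non-positively curved manifold is aspherical, hence its universal cover is contractible and $N$ is essential, in particular rationally essential. So the contrapositive of Theorem~\ref{t:top} applies: if $\Gamma$ were \emph{not} presentable by a product, then no non-trivial product could dominate $N$, contradicting $(1)$. Thus $(1)$ forces $(2)$. This step is short precisely because the hard topological work was already done in Proposition~\ref{p:natural} and Theorem~\ref{t:top}.

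\textbf{The implication $(2)\Rightarrow(3)$ is the first genuinely new content, and here I would use the geometry of $\Gamma$ as a fundamental group of a non-positively curved manifold.} The key tool is the algebraic/geometric rank from the Ballmann--Eberlein theory already cited in the proof of Proposition~\ref{p:NP}. If $N$ has rank one then Proposition~\ref{p:NP} already shows $\Gamma$ is not presentable by a product, so the hypothesis $(2)$ would be vacuous; hence I may assume $N$ has higher rank, or more to the point I would argue directly. Concretely: since $\Gamma$ is torsion-free, every finite index subgroup has a well-defined centre, and presentability by a product gives, via Lemma~\ref{l:center}, commuting infinite subgroups $\Gamma_1,\Gamma_2$ whose product has finite index, with $\Gamma_1\cap\Gamma_2$ central. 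Invoking the splitting theorem for fundamental groups of non-positively curved manifolds (the group-theoretic analogue of the de Rham decomposition, again from \cite{BE,EbJDG}), commuting subgroups generating a finite index subgroup force a finite index subgroup of $\Gamma$ to split as a direct product of infinite factors. I expect this to be \textbf{the main obstacle}: one must upgrade the purely group-theoretic "presentable by a product" to an honest \emph{direct product} splitting of a finite index subgroup, and this requires the Eberlein--Ballmann structure theory to rule out the subtle possibility that the two commuting factors overlap in a non-central way or fail to split off as de Rham factors.

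\textbf{The implication $(3)\Rightarrow(4)$ is the second new geometric ingredient}, promoting an algebraic splitting to a \emph{diffeomorphism} of a finite cover with a product. Here the plan is to pass to the finite covering $\overline N$ of $N$ corresponding to the finite index subgroup $\overline\Gamma=\Gamma_1\times\Gamma_2$ supplied by $(3)$. This $\overline N$ is again a closed non-positively curved manifold, and its fundamental group is reducible. By the splitting theorem of Lawson--Yau / Gromoll--Wolf for non-positively curved manifolds whose fundamental group splits as a direct product (with the factors necessarily having trivial centre after a further finite cover, which one arranges as in Proposition~\ref{p:NP}), $\overline N$ is isometric---in particular diffeomorphic---to a Riemannian product $N_1\times N_2$ realising the group splitting. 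I would cite this splitting theorem directly; the only care needed is to ensure the centre-triviality hypotheses of that theorem hold after passing to a further finite index subgroup, which is exactly the kind of bookkeeping already carried out in the proof of Proposition~\ref{p:NP}. With $(4)$ established the cycle closes and all four statements are equivalent.
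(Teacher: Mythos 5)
Your route is the same as the paper's (the cycle $(4)\Rightarrow(1)\Rightarrow(2)\Rightarrow(3)\Rightarrow(4)$, with $(1)\Rightarrow(2)$ coming from asphericity plus Theorem~\ref{t:top}, and the geometric content in $(2)\Rightarrow(3)$ and $(3)\Rightarrow(4)$), and in the case where the relevant finite index subgroup $\bar\Gamma$ has trivial centre your argument is correct: Lemma~\ref{l:center} makes the multiplication map $\Gamma_1\times\Gamma_2\to\bar\Gamma$ injective, so $\bar\Gamma$ is literally a direct product, and Gromoll--Wolf~\cite{GW} then gives an isometric splitting of the corresponding cover.

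The genuine gap is your treatment of the case where $\bar\Gamma$ has \emph{non-trivial centre}. Since $\Gamma$ is torsion-free, a non-trivial centre is infinite, and it remains infinite in every further finite index subgroup (if $C(\bar\Gamma)$ is infinite and $\Gamma'\leq\bar\Gamma$ has finite index, then $C(\bar\Gamma)\cap\Gamma'$ is an infinite central subgroup of $\Gamma'$). So your parenthetical claim that trivial centre can be arranged ``after a further finite cover, as in Proposition~\ref{p:NP}'' is false: that proposition's centre-triviality rests on the rank one hypothesis via Eberlein's Euclidean de Rham factor theorem, which is unavailable here (think of flat manifolds or non-trivially twisted flat torus bundles). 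This breaks your argument in two places. In $(2)\Rightarrow(3)$, when $C(\bar\Gamma)\neq 1$ the kernel of the multiplication map is the infinite group $\Gamma_1\cap\Gamma_2$ by Lemma~\ref{l:center}, so $\bar\Gamma$ is only a proper quotient of $\Gamma_1\times\Gamma_2$ and you do not yet have any direct product splitting; the missing ingredient is Eberlein's canonical form theorem~\cite{E}, which splits off $C(\bar\Gamma)$ as a direct factor of a finite index subgroup (non-trivially, since $\dim N\geq 2$ rules out $\Gamma$ being virtually~$\Z$). In $(3)\Rightarrow(4)$, when the centre is non-trivial Gromoll--Wolf does not apply and the splitting of a finite cover is in general \emph{not} isometric~\cite{LY,E}; one only gets a diffeomorphic splitting off of a torus factor, again from~\cite{E}, so your assertion that the cover is ``isometric---in particular diffeomorphic---to a Riemannian product'' overstates what is true and is not justified by the theorem you cite.
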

\begin{proof}
  Recall that by the Cartan--Hadamard theorem manifolds of
  non-positive curvature are aspherical, and so are classifying spaces
  for their fundamental groups. Therefore, (1) implies (2) by
  Theorem~\ref{t:top}.

  Assume that (2) holds. Then there is a
  homomorphism~$\varphi\colon\Gamma_1\times\Gamma_2\longrightarrow\Gamma$
  that is surjective onto a finite index
  subgroup~$\bar\Gamma\subset\Gamma$ such that both $\Gamma_i$ have
  infinite image.  As discussed in Section~\ref{s:alg} we may take the
  $\Gamma_i$ to be subgroups of $\Gamma$ and $\varphi$ to be
  multiplication. If $\bar\Gamma$ has trivial centre, then
  Lemma~\ref{l:center} shows that $\varphi$ is injective, and so
  $\bar\Gamma$ is a non-trivial direct product. If $\bar\Gamma$ has
  non-trivial centre, then the centre is infinite because $\Gamma$ is
  torsion-free.  A result of Eberlein~\cite{E} shows that $\bar\Gamma$
  has a finite index subgroup that splits off $C(\bar\Gamma)$ as a
  direct factor. This gives a non-trivial splitting of the subgroup as
  a direct product, because we assumed $\dim N>1$, which implies that
  $\Gamma$ is not virtually~$\Z$. Thus (2) implies (3).

  Next, assume that (3) holds, so that there is a finite index
  subgroup~$\bar \Gamma$ of~$\Gamma$ that splits as a non-trivial
  direct product $\Gamma_1\times\Gamma_2$. If the group $\bar\Gamma$
  has trivial centre, then the Gromoll--Wolf~\cite{GW} splitting
  theorem gives an isometric splitting $\bar N = N_1\times N_2$, where
  $\bar N$ is the covering of $N$ corresponding to the
  subgroup~$\bar\Gamma$, and $\pi_1(N_i)=\Gamma_i$. If the centre of
  $\bar\Gamma$ is non-trivial, then again by Eberlein's
  results~\cite{E} some finite covering splits off a torus as a direct
  factor. (This splitting is usually not isometric~\cite{LY,E}.) Thus
  (3) implies (4).

  Finally, the implication from (4) to (1) is trivial.
\end{proof}
The implication from (1) to (4) shows that manifolds of non-positive
curvature dominated by products are virtually diffeomorphic to
products. As an immediate consequence of this, we confirm Gromov's
suggestion~\cite[5.36 on~p.~303f]{gromovmetric}:
\begin{cor}\label{c:locsym}
  Let $N$ be a closed locally symmetric space of non-compact type.
  Then $P\geq N$ for some non-trivial product $P$ if and only if $N$
  is reducible in the sense that it has a finite covering isometric to
  a product.
\end{cor}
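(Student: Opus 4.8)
The plan is to derive this corollary directly from Theorem~\ref{t:N-P}, with one extra ingredient needed to promote a diffeomorphic splitting to an isometric one. First I would observe that a closed locally symmetric space $N$ of non-compact type is in particular a closed Riemannian manifold of non-positive sectional curvature, and that its dimension is automatically at least two: the smallest irreducible symmetric space of non-compact type is the hyperbolic plane, and the condition ``non-compact type'' rules out any Euclidean de Rham factor. Hence Theorem~\ref{t:N-P} applies, and the equivalence of conditions (1) and (4) there tells us that $P\geq N$ for some non-trivial product $P$ if and only if some finite covering of $N$ is \emph{diffeomorphic} to a non-trivial product.

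It remains to upgrade this diffeomorphic splitting to an isometric one, which is exactly the content of reducibility as stated in the corollary. The key point is that, because the universal cover of $N$ is a symmetric space of non-compact type, it has no Euclidean de Rham factor; consequently, by the result of Eberlein~\cite{EbJDG} already invoked in the proof of Proposition~\ref{p:NP}, the centre of $\Gamma$ and of every finite index subgroup of $\Gamma$ is trivial.

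With this triviality of the centre in hand, I would retrace the implications (2)$\Rightarrow$(3)$\Rightarrow$(4) in the proof of Theorem~\ref{t:N-P}. Since every relevant finite index subgroup $\bar\Gamma$ has trivial centre, the passage from (2) to (3) produces via Lemma~\ref{l:center} a genuine direct product decomposition $\bar\Gamma=\Gamma_1\times\Gamma_2$, rather than one obtained by splitting off a central torus; and the passage from (3) to (4) then falls into the Gromoll--Wolf~\cite{GW} branch, which yields an honest \emph{isometric} splitting $\bar N=N_1\times N_2$ of the finite cover corresponding to $\bar\Gamma$. This gives the forward implication. The reverse implication is immediate: an isometric product decomposition of a finite cover is in particular a diffeomorphic one, so condition (4) of Theorem~\ref{t:N-P} holds and hence $P\geq N$ for the product $P$ supplied by the covering.

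The main obstacle --- really the only substantive step --- is ensuring that we land in the isometric (Gromoll--Wolf) case rather than in Eberlein's case, where the splitting is in general only diffeomorphic and not isometric~\cite{LY,E}. This is precisely where the hypothesis ``non-compact type'' does the essential work, by forcing the absence of a Euclidean factor and therefore the vanishing of the relevant centres.
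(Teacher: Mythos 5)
Your proposal is correct and takes essentially the same route as the paper: reduce to Theorem~\ref{t:N-P}, then observe that the absence of a Euclidean de Rham factor forces the centre of $\pi_1(N)$ (and of all its finite index subgroups) to be trivial, so that the splitting lands in the isometric Gromoll--Wolf branch rather than Eberlein's merely diffeomorphic one. The only cosmetic difference is that the paper also records an alternative algebraic argument for the triviality of the centre (torsion-free cocompact lattices in semisimple Lie groups without compact factors), whereas you use only the geometric one via Eberlein's result on the Clifford subgroup.
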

\begin{proof}
  Theorem~\ref{t:N-P} shows that $P\geq N$ is equivalent to $N$ having
  a finite covering diffeomorphic to a non-trivial product. In the
  case that $\pi_1(N)$ has trivial centre, the proof of
  Theorem~\ref{t:N-P} also shows that this diffeomorphic splitting is
  in fact isometric.  Thus we only have to check that the centre
  of~$\pi_1(N)$ is trivial.

  For $N$ to be a locally symmetric space of non-compact type means
  that the universal covering $\tilde N$ is a globally symmetric space
  without compact or Euclidean factors in its de Rham decomposition.
  There are two ways to see that this implies the triviality of the
  centre of $\pi_1(N)$, geometrically or algebraically.
  Geometrically, the centre of the fundamental group of any closed
  manifold of non-positive curvature is contained in the Clifford
  subgroup, which, by a result of Eberlein~\cite{EbJDG}, is of rank
  equal to the dimension of the Euclidean de Rham factor of $\tilde
  N$. Algebraically, for a locally symmetric space the fundamental
  group is a lattice in a semisimple Lie group with finite centre. In
  the case of a symmetric space with no compact or Euclidean factors
  the isometry group does not have any compact factors either, and
  standard results about lattices show that torsion-free cocompact
  lattices have trivial centre~\cite[Corollary~5.18]{raghunathan},
  \cite[Corollary~4.41]{wittemorris}.
\end{proof}

Combining our results with a result of Ballmann--Eberlein~\cite{BE},
we have the following:
\begin{cor}
  Let $N$ be a closed, oriented, connected Riemannian manifold of
  non-positive sectional curvature, and $\Gamma$ its fundamental
  group.  The four conditions in Theorem~\ref{t:N-P} hold for $N$,
  respectively for $\Gamma$, if and only if $N$, respectively
  $\Gamma$, is of rank $\geq 2$ and $N$ is not an irreducible locally
  symmetric space of non-compact type.
\end{cor}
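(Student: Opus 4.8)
The plan is to combine Theorem~\ref{t:N-P}, which already collapses the four conditions into a single equivalent statement, with the rank dichotomy for non-positively curved manifolds supplied by Ballmann--Eberlein~\cite{BE}. Since the four conditions are mutually equivalent, it suffices to decide, for each $N$, whether they hold and to check that this matches the proposed characterisation; I would organise the whole argument around the rank of $N$ and the de Rham decomposition of its universal covering $\tilde N$.

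First I would dispose of the rank one case. If $N$ has rank one (recall $\dim N\geq 2$), then Proposition~\ref{p:NP} shows that $\Gamma$ is not presentable by a product, so condition~(2), and hence all four conditions, fail. This simultaneously shows that whenever the conditions hold we must have rank $\geq 2$, and supplies the ``rank $\geq 2$'' clause of the equivalence in the forward direction.

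Next, assuming rank $\geq 2$, I would invoke rank rigidity in the packaged form of Ballmann--Eberlein to analyse the de Rham decomposition $\tilde N=\R^k\times \tilde N_1\times\cdots\times\tilde N_l$ into a Euclidean factor and irreducible non-flat factors, which falls into three mutually exclusive cases. If $k\geq 1$, then by Eberlein's description of the Clifford subgroup~\cite{EbJDG} the centre of $\Gamma$ is infinite, so $\Gamma$ is presentable by a product by the contrapositive of Lemma~\ref{l:C}, and condition~(2) holds. If $k=0$ but $l\geq 2$, then $\tilde N$ splits non-trivially into non-flat factors; moreover the absence of a Euclidean factor forces the centre of $\Gamma$ to be trivial (again by the Clifford subgroup description), so the Gromoll--Wolf splitting theorem~\cite{GW} yields a finite covering of $N$ that is isometrically a non-trivial product, giving conditions~(3) and~(4). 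The only remaining possibility is $k=0$ and $l=1$, i.e.\ $\tilde N$ irreducible and non-flat of rank $\geq 2$; here rank rigidity forces $\tilde N$ to be a symmetric space, necessarily of non-compact type since the curvature is non-positive and $\tilde N$ is non-flat, so that $N$ is an irreducible locally symmetric space of non-compact type. Thus, among manifolds of rank $\geq 2$, the conditions can fail only in this last case.

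Finally I would confirm that the conditions do fail in that last case, by contradiction using Corollary~\ref{c:locsym}: an irreducible locally symmetric space of non-compact type has trivial centre, so the diffeomorphic splitting furnished by condition~(4) would in fact be isometric, forcing $\tilde N$ to decompose as a product and contradicting irreducibility. Assembling the three cases gives exactly the stated equivalence, in both directions. I expect the main obstacle to be the honest application of rank rigidity together with the accompanying splitting theorems, in particular keeping careful track of the Euclidean factor and of the (possibly non-trivial) centre so that each sub-case is routed to the correct one of conditions~(2)--(4); the group-theoretic and essentialness inputs are already available from Theorem~\ref{t:N-P} and the earlier sections.
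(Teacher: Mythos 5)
Your opening move (rank one $\Rightarrow$ condition~(2) fails, by Proposition~\ref{p:NP}) agrees with the paper, but the rest of the argument has a genuine gap. In your case $k=0$, $l\geq 2$ you pass from a de~Rham splitting of the universal covering $\tilde N$ to a splitting of a finite covering of $N$, citing Gromoll--Wolf. The Gromoll--Wolf theorem~\cite{GW} runs in the opposite direction: it produces a splitting of the manifold from a splitting of the \emph{fundamental group}, and a product decomposition of $\tilde N$ does not yield a product decomposition of $\Gamma$ or of any finite index subgroup. A compact quotient of $\mathbb{H}^2\times\mathbb{H}^2$ by an irreducible cocompact lattice is the standard counterexample: here $k=0$ and $l=2$, yet no finite covering splits, and by Corollary~\ref{c:locsym} such a manifold is not dominated by a product --- whereas your case~(b) would place it among the manifolds satisfying conditions~(3) and~(4). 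The underlying confusion is that irreducibility of a locally symmetric space means the lattice is irreducible (no finite covering splits), \emph{not} that $\tilde N$ is de~Rham irreducible; your case $l=1$ therefore does not exhaust the irreducible locally symmetric spaces of non-compact type, and both directions of the stated equivalence break in case~(b). (A smaller issue: for $k\geq 1$ you assert that the centre of $\Gamma$ is infinite because the Clifford subgroup has rank $k$; the centre is only \emph{contained} in the Clifford subgroup, so this needs the passage to a finite index subgroup as in Eberlein's results before Lemma~\ref{l:C} applies.)

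The paper's proof avoids all of this by quoting the structure theorem of Ballmann--Eberlein (Theorem~C of~\cite{BE}) in exactly the form needed: for a closed non-positively curved manifold of rank $\geq 2$ that is not an irreducible locally symmetric space of non-compact type, some finite index subgroup of $\Gamma$ splits as a non-trivial direct product, i.e.\ condition~(3) holds. Combined with your (correct) rank-one step and the observation that condition~(3) fails for irreducible locally symmetric spaces by the very definition of irreducibility, this gives the corollary in a few lines. If you want to argue via the de~Rham decomposition instead, you must input rank rigidity at the level of the \emph{group} (or of the quotient manifold), which is precisely what Theorem~C of~\cite{BE} packages; the decomposition of $\tilde N$ alone is not enough.
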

\begin{proof}
  If $N$, respectively $\Gamma$, is of rank one, then condition (2) in
  Theorem~\ref{t:N-P} does not hold by Proposition~\ref{p:NP}.  If $N$
  is an irreducible locally symmetric space of non-compact type, then,
  by the definition of irreducibility, condition~(3) does not hold.
  In all other cases, a result of
  Ballmann--Eberlein~\cite[Theorem~C]{BE} shows that condition~(3)
  does hold.
\end{proof}

Theorem~\ref{t:N-P} has the following extension to rationally
essential manifolds:
\begin{cor}\label{c:nonpos}
  Let $M$ be a closed, oriented, manifold whose fundamental group is
  also the fundamental group of a closed manifold $N$ admitting a
  Riemannian metric of non-positive sectional curvature.  If $M$ is
  rationally essential and $P\geq M$ for a non-trivial product~$P$ of
  closed manifolds, then $N$ has a finite covering $\bar N$ that is
  diffeomorphic to a non-trivial product.
\end{cor}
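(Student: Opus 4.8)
The plan is to transfer the hypothesis on $M$ into a statement about the common fundamental group $\Gamma := \pi_1(M) = \pi_1(N)$, and then to invoke the equivalence of conditions (2) and (4) in Theorem~\ref{t:N-P} for the non-positively curved manifold $N$. The only genuine subtlety is that the domination hypothesis concerns $M$, whereas Theorem~\ref{t:N-P} speaks about $N$; these manifolds share only their fundamental group and may well have different dimensions, so I must check separately that $N$ meets the standing hypotheses of that theorem.

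First I would apply the contrapositive of Theorem~\ref{t:top} to $M$: since $M$ is rationally essential and $P\geq M$ for a non-trivial product $P$, its fundamental group must be presentable by a product. Because $\pi_1(M)=\pi_1(N)=\Gamma$, this is exactly condition~(2) of Theorem~\ref{t:N-P} for $N$. In particular $\Gamma$ is infinite, as presentability by a product supplies two factors with infinite image.

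Before feeding this into Theorem~\ref{t:N-P}, I would verify its dimension hypothesis for $N$. As $P$ is a non-trivial product, $\dim P\geq 2$, and a map of non-zero degree forces $\dim M=\dim P\geq 2$; rational essentialness of $M$ then gives $H_{\dim M}(B\Gamma;\Q)\neq 0$. This rules out $\Gamma\cong\Z$, since $B\Z\simeq S^1$ has no rational homology in degrees $\geq 2$. Hence $N$ is neither a point nor a circle, so $\dim N\geq 2$. Should $N$ fail to be orientable, I would first pass to its orientation double cover, a finite cover whose fundamental group is again presentable by a product by Lemma~\ref{l:finindexpres}; it suffices to establish the conclusion for this oriented cover, any finite product cover of which is also a finite cover of $N$. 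With $N$ (or its orientation cover) now closed, oriented, connected, non-positively curved and of dimension $\geq 2$, and with condition~(2) in hand, the implication (2)$\Rightarrow$(4) of Theorem~\ref{t:N-P} produces a finite covering diffeomorphic to a non-trivial product, which is the desired $\bar N$. The main obstacle, such as it is, is precisely this bookkeeping—securing $\dim N\geq 2$ and handling orientability—since all of the substantive content is already packaged into Theorems~\ref{t:top} and~\ref{t:N-P}.
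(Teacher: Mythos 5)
Your argument is correct and is essentially the paper's own proof: apply Theorem~\ref{t:top} to $M$ to obtain condition~(2) of Theorem~\ref{t:N-P} for the shared fundamental group, then invoke the implication (2)$\Rightarrow$(4) for $N$. The additional verification that $\dim N\geq 2$ (via rational essentialness ruling out $\Gamma\cong\Z$ or $\Gamma$ trivial) and the remark on orientability are careful bookkeeping that the paper leaves implicit, but they do not change the route.
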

\begin{proof}
  Theorem~\ref{t:top} shows that $P\geq M$ implies statement (2) from
  Theorem~\ref{t:N-P} for the fundamental group $\pi_1(M)=\pi_1(N)$.
  The implication from (2) to (4) in Theorem~\ref{t:N-P} gives the
  conclusion.
\end{proof}
\begin{rem}
  In the case that every finite index subgroup of $\pi_1(N)$ has
  trivial centre, the combination of Theorems~\ref{t:top}
  and~\ref{t:N-P} shows that in the situation of
  Corollary~\ref{c:nonpos} with $P=X_1\times X_2\geq M$, each $X_i$ is
  rationally essential in the corresponding factor of a finite
  covering of $N$.
\end{rem}

To end this section, we point out that many non-positively curved
Riemannian manifolds of rank one are comparable to direct products in
the relation~$\geq$. For example, closed hyperbolic Riemann
surfaces~$N$ satisfy~$N\geq S^1\times S^1$.  More interestingly, the
branched covering construction of Fornari--Schroeder~\cite{FS}
produces rank one manifolds~$N$ of higher dimension that are close to
products in a certain geometric sense, and that by construction
satisfy~$N\geq (X_1\times X_2)$ for suitable $X_i$.

\section{Fibre bundles}\label{s:fibre}

In this section we consider fibrations whose base and fibre have
fundamental groups not presentable by products.

\begin{thm}\label{t:fibre}
  Let $M$ be a closed, oriented, connected manifold that is the total
  space of a fibration whose base~$B$ and fibre~$F$ are closed,
  oriented, connected aspherical manifolds. Assume that $\pi_1(B)$ and
  $\pi_1(F)$ are not presentable by products. If $P\geq M$ for some
  non-trivial product~$P=X_1\times X_2$ of closed, oriented, connected
  manifolds, then $M$ is finitely covered by a product whose factors
  are finite coverings of~$B$ and $F$ respectively and are dominated
  by the~$X_i$.
\end{thm}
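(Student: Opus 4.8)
The plan is to reduce the theorem to the purely group-theoretic splitting of Proposition~\ref{p:fibre} and then read the resulting decomposition of $\pi_1(M)$ back into geometry through the fibration. First I would note that, since $B$ and $F$ are aspherical, the long exact homotopy sequence of the fibration shows that $M$ is aspherical as well; hence $c_M$ is a homotopy equivalence and $M$ is rationally essential. Writing $\Gamma=\pi_1(M)$, $K=\pi_1(F)$ and $Q=\pi_1(B)$, the same sequence yields an extension
\begin{equation*}
1\longrightarrow K\longrightarrow\Gamma\longrightarrow Q\longrightarrow 1,
\end{equation*}
in which $K$ and $Q$ are fundamental groups of closed aspherical manifolds, hence torsion-free and non-trivial, and are not presentable by products by hypothesis. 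Now $P\geq M$ together with rational essentialness and Theorem~\ref{t:top} forces $\Gamma$ to be presentable by a product, so Proposition~\ref{p:fibre} applies and produces a finite index subgroup $\bar\Gamma\subset\Gamma$ splitting as a direct product $\bar K\times\bar Q$ of finite index subgroups $\bar K\subset K$ and $\bar Q\subset Q$.

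Next I would convert this algebraic splitting into coverings. The subgroups $\bar K$ and $\bar Q$ determine finite coverings $\bar F\to F$ and $\bar B\to B$, again closed aspherical manifolds, while $\bar\Gamma$ determines a finite covering $\bar M\to M$. Since $\bar\Gamma\cap K=\bar K$ and the image of $\bar\Gamma$ in $Q$ is $\bar Q$, the fibration restricts over $\bar B$ to a fibration $\bar F\to\bar M\to\bar B$ whose extension of fundamental groups is precisely the trivial direct product $\bar K\times\bar Q$. In particular the monodromy of $\pi_1(\bar B)$ on $\pi_1(\bar F)$ is trivial and the inclusion of the $\bar Q$-factor furnishes a section. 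As $\bar F$ is a $K(\bar K,1)$, the fibre-homotopy type is then classified by a single class in $H^2(\bar B;Z(\bar K))$, which is exactly the extension class and therefore vanishes; so the fibration is fibre-homotopy-trivial and $\bar M\simeq\bar F\times\bar B$.

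To obtain the domination of the factors I would return to Proposition~\ref{p:natural} with $N=M$ and $P=X_1\times X_2$. There the classes $\alpha_i$ are the images of the fundamental classes $[X_i]$ in $H_{\dim X_i}(B\Gamma_i;\Q)$, and after the splitting these groups become the rational homologies of $\bar F$ and $\bar B$. Non-triviality of each $\alpha_i$ forces $\dim X_1\leq\dim\bar F$ and $\dim X_2\leq\dim\bar B$; since $\dim X_1+\dim X_2=\dim M=\dim\bar F+\dim\bar B$, both inequalities are equalities. Hence each $\alpha_i$ is a non-zero multiple of the fundamental class of the corresponding factor, which says that the maps $X_1\to\bar F$ and $X_2\to\bar B$ built from $f$ have non-zero degree, so the factors are dominated by the $X_i$. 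Some routine bookkeeping is needed here to match the indexing of the $\Gamma_i=\im(\pi_1(f_i))$ with the factors coming out of Proposition~\ref{p:fibre}, possibly after replacing $M$ and the $X_i$ by finite covers.

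The main obstacle is the final upgrade: from the fibre-homotopy equivalence $\bar M\simeq\bar F\times\bar B$ to an honest diffeomorphism realising $\bar M$ as the product covering. Asphericity and the vanishing of the $H^2(\bar B;Z(\bar K))$ obstruction only deliver a homotopy-level product, since a smooth fibre bundle can be fibre-homotopy-trivial yet smoothly non-trivial, with total space not diffeomorphic to the product. The direct product structure of $\pi_1(\bar M)$ is what kills the monodromy and the primary, central-extension-type obstruction (already excluding, for instance, the non-product nilmanifolds arising from non-trivial torus bundles over tori), and together with the section it is the natural input for trivialising the bundle after a further finite covering. Controlling the remaining higher obstructions, living in the homotopy of the diffeomorphism group of the fibre, is the delicate geometric point, and I expect the proof to lean on the fibre-bundle structure together with the direct-product splitting rather than on any general rigidity of aspherical manifolds.
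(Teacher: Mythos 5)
Your proposal is correct and follows essentially the same skeleton as the paper's proof: asphericity of $M$, Lemma~\ref{l:easy} and Proposition~\ref{p:natural} (equivalently Theorem~\ref{t:top}) to make $\pi_1(M)$ presented by the product $\Gamma_1\times\Gamma_2$, Proposition~\ref{p:fibre} and its proof to split a finite index subgroup as a direct product of finite index subgroups of $\pi_1(F)$ and $\pi_1(B)$, and Proposition~\ref{p:natural} again for the domination of the factors; your dimension count in the last step is exactly the intended argument. Two remarks. First, your route to $\bar M\simeq\bar F\times\bar B$ via fibre-homotopy triviality and the vanishing of a class in $H^2(\bar B;Z(\bar K))$ is an unnecessary detour: $\bar M$, $\bar F$ and $\bar B$ are all aspherical and $\pi_1(\bar M)\cong\bar K\times\bar Q\cong\pi_1(\bar F\times\bar B)$, so the homotopy equivalence is immediate from the uniqueness of aspherical CW-complexes with given fundamental group, which is all the paper uses. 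Second, the ``main obstacle'' you flag at the end --- upgrading the homotopy equivalence to a diffeomorphism --- is not overcome in the paper either: the remark following the proof states explicitly that the argument only shows that a finite covering of $M$ is \emph{homotopy equivalent} to a trivial bundle, and a smooth trivialisation is obtained only in special situations such as Corollary~\ref{c:surfbundle}, where Hillman's result on surface bundles over surfaces is invoked. So the conclusion of Theorem~\ref{t:fibre} is to be read at the homotopy level; with that reading your argument is complete, and your final paragraph is a correct diagnosis of why nothing stronger is claimed in general.
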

\begin{rem}
  In the same way that Theorem~\ref{t:N-P} implies
  Corollary~\ref{c:nonpos}, Theorem~\ref{t:fibre} has consequences for
  rationally essential manifolds in place of aspherical ones.
\end{rem}
\begin{proof}
  Suppose there are closed, oriented, connected manifolds~$X_i$ such
  that their product~$P=X_1\times X_2$ admits a map~$f\colon
  P\longrightarrow M$ of degree $d\neq 0$.
  Proposition~\ref{p:natural} shows that the images $\Gamma_i$ of the
  fundamental groups of the factors $X_i$ are both infinite and
  together generate a finite index subgroup~$\Gamma$ of~$\pi_1(M)$.
  Applying Proposition~\ref{p:fibre} (and its proof) to~$\Gamma$ we
  see that $\Gamma \cong \Gamma_1 \times \Gamma_2$ and that $\Gamma_1$
  and $\Gamma_2$ are finite index subgroups of~$\pi_1(B)$ and
  $\pi_1(F)$ respectively. In particular, $M$ is finitely covered by a
  manifold~$\bar M$ whose fundamental group is the direct product of
  the finite index subgroups~$\Gamma_i$ of~$\pi_1(B)$ and $\pi_1(F)$
  respectively.

  As the base $B$ and fibre $F$ are aspherical, so is $M$. It follows
  that $\bar M$ is homotopy equivalent to the product of finite covers
  of~$B$ and~$F$ corresponding to~$\Gamma_1$ and~$\Gamma_2$.
  Proposition~\ref{p:natural} shows that these factors are dominated
  by the~$X_i$.
\end{proof}
The above proof shows that a finite covering of $M$ is homotopy
equivalent to a trivial bundle. In specific situations one can
sometimes prove more. In this direction we have for example:
\begin{cor}\label{c:surfbundle}
  Let $M$ be a surface bundle over a surface with base $B$ and fibre
  $F$ both of genus $\geq 2$, and let $\Gamma:=\pi_1(M)$.  The
  following are equivalent:
\begin{enumerate}
\item $P\geq M$ for some non-trivial product~$P$ of closed, oriented
  manifolds,
\item the fundamental group $\Gamma$ is presentable by a product,
\item a finite index subgroup of~$\Gamma$ splits as a non-trivial
  direct product,
\item there is a finite covering of $M$ diffeomorphic to a trivial
  surface bundle.
\end{enumerate}
\end{cor}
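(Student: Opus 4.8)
The plan is to establish the cycle of implications $(1)\Rightarrow(2)\Rightarrow(3)\Rightarrow(4)\Rightarrow(1)$. First I would record the structural facts on which the argument rests. Since $B$ and $F$ are closed surfaces of genus $\geq 2$ they are aspherical, hence so is the total space $M$; thus $c_M$ is a homotopy equivalence, $M$ is rationally essential, and $\Gamma=\pi_1(M)$ fits into an extension
\[
  1 \longrightarrow \pi_1(F) \longrightarrow \Gamma \longrightarrow \pi_1(B) \longrightarrow 1 \ .
\]
Both $\pi_1(F)$ and $\pi_1(B)$ are torsion-free, non-trivial, hyperbolic and not virtually cyclic, hence not presentable by products by Proposition~\ref{hypgroupprop}; this is precisely what allows Proposition~\ref{p:fibre} to be applied to this extension.

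The easy implications I would dispatch first. For $(1)\Rightarrow(2)$, since $M$ is rationally essential the proof of Theorem~\ref{t:top} shows that a map of non-zero degree from $P=X_1\times X_2$ forces $\Gamma$ to be presented by the product $\Gamma_1\times\Gamma_2$. For $(2)\Rightarrow(3)$, Proposition~\ref{p:fibre} tells us that presentability by a product yields a finite index subgroup $\bar\Gamma\cong\Gamma_1\times\Gamma_2$ with $\Gamma_1$, $\Gamma_2$ finite index subgroups of $\pi_1(F)$ and $\pi_1(B)$ respectively; both are infinite, so this is a non-trivial splitting. Finally $(4)\Rightarrow(1)$ is immediate, since a finite covering diffeomorphic to a product $B'\times F'$ dominates $M$ through the covering projection, which has non-zero degree.

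The substance of the corollary, and what genuinely goes beyond Theorem~\ref{t:fibre}, lies in $(3)\Rightarrow(4)$: promoting homotopy-level triviality to an honest smooth product structure. Here I would argue geometrically. The finite index subgroup $\bar\Gamma=\Gamma_1\times\Gamma_2$ corresponds to a finite covering $\bar M$ of $M$ which is again a surface bundle, over a finite cover $B'$ of $B$ with fibre a finite cover $F'$ of $F$; concretely $\Gamma_1=\bar\Gamma\cap\pi_1(F)=\pi_1(F')$ is the fibre subgroup while $\Gamma_2$ maps isomorphically onto $\pi_1(B')$. Since $\bar\Gamma$ is a \emph{direct} product, $\Gamma_2$ centralises $\Gamma_1$, so the conjugation action of $\pi_1(B')$ on $\pi_1(F')$ is trivial and the induced monodromy $\pi_1(B')\to\mathrm{Out}(\pi_1(F'))$ vanishes. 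By the Dehn--Nielsen--Baer theorem in genus $\geq 2$ (see~\cite{MCG}), $\mathrm{Out}(\pi_1(F'))$ is the extended mapping class group of $F'$, so the orientation-preserving monodromy in $\mathrm{MCG}(F')$ is trivial. By the theorem of Earle--Eells the identity component of the diffeomorphism group of a closed surface of genus $\geq 2$ is contractible, whence oriented $F'$-bundles over $B'$ are classified by their monodromy in $\mathrm{MCG}(F')$; a trivial monodromy therefore produces a smooth product $\bar M\cong B'\times F'$.

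I expect $(3)\Rightarrow(4)$ to be the genuine obstacle. The algebra only detects the outer action of the base group on the fibre group, so the two external inputs --- Dehn--Nielsen--Baer to identify $\mathrm{Out}(\pi_1(F'))$ with the mapping class group, and Earle--Eells to classify surface bundles by this monodromy --- are exactly what is needed to bridge the gap between a homotopy equivalence to a product, which is all that Theorem~\ref{t:fibre} delivers, and an actual diffeomorphism of a finite cover with a trivial surface bundle.
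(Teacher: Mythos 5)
Your proof is correct, and for the steps $(1)\Rightarrow(2)\Rightarrow(3)$ and $(4)\Rightarrow(1)$ it follows the paper exactly (Theorem~\ref{t:top} for rationally essential targets, Proposition~\ref{p:fibre} applied to the extension $1\to\pi_1(F)\to\Gamma\to\pi_1(B)\to 1$ with both ends hyperbolic and not virtually cyclic). Where you genuinely diverge is the smoothing step $(3)\Rightarrow(4)$. The paper extracts from the proof of Theorem~\ref{t:fibre} only that a finite cover of $M$ is \emph{homotopy equivalent} to a trivial surface bundle, and then invokes a four-dimensional result of Hillman~\cite[Corollary~5.6.4]{H} to promote this homotopy equivalence to a fibre-preserving diffeomorphism (after passing to a further cover of the base to arrange $\chi(B)<\chi(F)$). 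You instead observe that the cover $\bar M$ corresponding to $\bar\Gamma=\Gamma_1\times\Gamma_2$ is itself a surface bundle $F'\to\bar M\to B'$ whose extension class splits, so the outer action of $\pi_1(B')$ on $\pi_1(F')$ is trivial; Dehn--Nielsen--Baer identifies $\mathrm{Out}(\pi_1(F'))$ with the extended mapping class group, and Earle--Eells contractibility of $\mathrm{Diff}_0(F')$ shows that smooth oriented $F'$-bundles over the aspherical base $B'$ are classified by the monodromy in the mapping class group, so $\bar M$ is a smooth product. Both routes rely on an external input, but yours stays entirely within the classification of surface bundles and never needs to smooth an abstract homotopy equivalence of $4$-manifolds; it also meshes naturally with the remark the authors make immediately after the corollary, that condition (4) is equivalent to finiteness of the monodromy image in $\mathrm{MCG}(F)$. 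One small point worth making explicit: an arbitrary witness of (3) need not be aligned with the fibration a priori, so you should note that $(3)\Rightarrow(2)$ is immediate ($\Gamma$ is torsion-free, so non-trivial factors are infinite) and that the aligned splitting $\Gamma_1\subset\pi_1(F)$, $\Gamma_2\cong\pi_1(B')$ is then supplied by the proof of Proposition~\ref{p:fibre} — which is how you in fact use it.
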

It is well known that the last condition is equivalent to the
finiteness of the image of the monodromy representation of $\pi_1(B)$
in the mapping class group of $F$.
\begin{proof}
  As the base and fibre are negatively curved, their fundamental
  groups are not presentable by products by
  Proposition~\ref{hypgroupprop}. The proof of Theorem~\ref{t:fibre}
  shows that the first three conditions are equivalent, and that they
  are equivalent to $M$ having a finite cover that is homotopy
  equivalent to trivial surface bundle. By a result of
  Hillman~\cite[Corollary~5.6.4 on p.~94]{H} we may assume that this
  homotopy equivalence is a fibre-preserving diffeomorphism. (The
  assumption $\chi(B) < \chi(F)$ in Hillman's result can always be
  arranged by pulling back to a finite covering of the base.)
\end{proof}
Corollary~\ref{c:surfbundle} is interesting from several different
points of view. First of all, there are many examples of surface
bundles over surfaces with both fibre and base of genus~$\geq 2$ that
do not admit any metric of non-positive sectional curvature~\cite{KL}.
Thus Corollary~\ref{c:surfbundle} cannot be deduced from the results
of the previous section, and Theorem~\ref{t:fibre} is of a different
nature than the differential-geometric results of that section. Second
of all, no surface bundle over a surface is known to admit a
negatively curved metric, so that Corollary~\ref{c:surfbundle} has no
known overlap with results about maps of products to negatively curved
targets.

In higher dimensions, Theorem~\ref{t:fibre} and its potential
generalisations from bundles to other aspherical manifolds raise the
following question: If $M$ is a closed, oriented, connected,
aspherical manifold whose fundamental group splits as a non-trivial
direct product~$\Gamma_1 \times \Gamma_2$, can $M$ be split up to
homotopy or homeomorphism as a product of closed, oriented, connected
manifolds with fundamental group~$\Gamma_1$ and $\Gamma_2$
respectively? Using the sophisticated machinery developed in the field
of topological rigidity, this question can be answered affirmatively
for a large class of such manifolds~\cite{lueck}.  This solution
relies on deep results concerning the Farrell-Jones conjecture, the
Borel conjecture, the Novikov conjecture, and the resolution of
homology manifolds. For non-positively curved manifolds, we have seen
in the previous section that the geometric arguments in the proof of
the Gromoll--Wolf splitting theorem~\cite{GW} take care of these
complications.

\section{Amenable centralisers}\label{s:sv}

In our discussion so far we have typically assumed that the target
manifolds are rationally essential, and that their fundamental groups
have small centralisers. For the groups occurring in
Theorem~\ref{t:alg}, the centralisers are as small as possible, that
is (virtually) cyclic. This bound on the size of centralisers was not
needed for all elements, but for the generic ones, e.g., the elements
of infinite order in hyperbolic groups, or the pseudo-Anosov elements
in mapping class groups.

In this section we treat groups with larger centralisers, which are
not necessarily virtually Abelian.  The price to be paid is that
rational essentialness has to be replaced by a stronger assumption.
As an example of such a generalisation we consider fundamental groups
with amenable centralisers and manifolds with non-zero simplicial
volume.  For background on the simplicial volume we refer to the work
of Gromov~\cite{Gromov} and the exposition by Ivanov~\cite{ivanov},
and for information on amenability to Paterson's book~\cite{paterson}.
\begin{thm}\label{svthm}
  Let $M$ be a closed, oriented, connected manifold with non-zero
  simplicial volume.  Assume that $\pi_1(M)$ contains an element of
  infinite order, and that every element of infinite order has
  amenable centraliser. Then $P\ngeq M$ for any non-trivial
  product~$P$ of closed, oriented, connected manifolds.
\end{thm}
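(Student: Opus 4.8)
The plan is to run the argument of Theorem~\ref{t:top}, replacing rational essentialness and the group-theoretic hypothesis by the stronger information carried by the simplicial volume. Suppose for contradiction that there is a map $f\colon X_1\times X_2\longrightarrow M$ of degree $d\neq 0$ from a non-trivial product. As in Section~\ref{proofsec} I pass to the images $\Gamma_i=\im(\pi_1(f_i))$ and $\Gamma=\im(\pi_1(f))$, so that by Lemma~\ref{l:easy} the subgroups $\Gamma_i$ commute, generate the finite-index subgroup $\Gamma\subset\pi_1(M)$, and Proposition~\ref{p:natural} supplies classes $\alpha_i\in H_{\dim X_i}(B\Gamma_i;\R)$ (in positive degree, since $P$ is non-trivial) with
\[
H_n(c_M)(d\cdot[M])=H_n(\overline{B\varphi})(\alpha_1\times\alpha_2).
\]

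The key new input is that the simplicial volume is an $\ell^1$-semi-norm that is non-increasing under continuous maps and is preserved by the classifying map: by Gromov's theory~\cite{Gromov,ivanov} one has $\|M\|=\|H_n(c_M)([M])\|_1$, which is positive by hypothesis. Applying functoriality to the displayed identity gives
\[
|d|\cdot\|M\|=\|H_n(\overline{B\varphi})(\alpha_1\times\alpha_2)\|_1\le\|\alpha_1\times\alpha_2\|_1,
\]
and Gromov's cross-product estimate bounds the right-hand side by a constant multiple of $\|\alpha_1\|_1\cdot\|\alpha_2\|_1$. It therefore suffices to show that one of the factors $\Gamma_i$ is amenable: the $\ell^1$-semi-norm then vanishes on the corresponding positive-degree class, since amenable groups have trivial bounded cohomology in positive degrees~\cite{ivanov,paterson}, and this forces $\|M\|=0$, a contradiction.

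To produce an amenable factor I exploit the centraliser hypothesis. Since $\Gamma$ has finite index in $\pi_1(M)$, a suitable power of the given infinite-order element lies in $\Gamma$, so $\Gamma$ contains an element $h$ of infinite order. Because the commuting subgroups $\Gamma_1,\Gamma_2$ generate $\Gamma$, their product is already a subgroup, so $h=ab$ with $a\in\Gamma_1$, $b\in\Gamma_2$ and $a,b$ commuting; were both $a$ and $b$ of finite order, then $h$ would be too, so after relabelling $a\in\Gamma_1$ has infinite order. Now $\Gamma_2$ commutes with $a$, hence $\Gamma_2\subset C_{\pi_1(M)}(a)$, and the latter is amenable by hypothesis. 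As a subgroup of an amenable group, $\Gamma_2$ is amenable, which closes the argument.

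The main obstacle is entirely at the level of semi-norms: the proof must invoke the equality $\|M\|=\|H_n(c_M)([M])\|_1$ (that $c_M$ preserves the $\ell^1$-semi-norm), the vanishing of the semi-norm on positive-degree classes of amenable groups, and its sub-multiplicativity under the homological cross-product. All three are standard consequences of Gromov's duality between the $\ell^1$-semi-norm and bounded cohomology~\cite{Gromov,ivanov}. The only genuinely new step is the elementary observation that an infinite-order element of $\Gamma$ must have an infinite-order component in one of the two commuting factors; this is precisely what allows the amenable-centraliser hypothesis to play the role occupied by virtual cyclicity in the proof of Theorem~\ref{t:alg}.
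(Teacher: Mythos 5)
Your proposal is correct and follows essentially the same route as the paper's proof: pass to the commuting subgroups $\Gamma_1,\Gamma_2$ via Lemma~\ref{l:easy} and Proposition~\ref{p:natural} with real coefficients, use the centraliser hypothesis to show one factor is amenable, and then derive a contradiction from the mapping theorem, the vanishing of the $\ell^1$-semi-norm on positive-degree classes of amenable groups, and the sub-multiplicativity of the cross-product. Your spelled-out justification that some $\Gamma_i$ contains an infinite-order element (writing $h=ab$ with commuting factors and noting both cannot have finite order) is a slightly more explicit version of a step the paper merely asserts, but the argument is the same.
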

\begin{proof}
  Suppose for a contradiction that $P=X_1\times X_2$ is a non-trivial
  product of closed, oriented, connected manifolds admitting a
  map~$f\colon P\longrightarrow M$ of degree~$d\neq 0$. By
  Lemma~\ref{l:easy} we have commuting
  subgroups~$\Gamma_1$,~$\Gamma_2\subset\pi_1(M)$ such that the
  multiplication map~$\Gamma_1\times\Gamma_2\longrightarrow\pi_1(M)$
  is surjective onto a subgroup~$\Gamma\subset\pi_1(M)$ of finite
  index.

  The assumption that $\pi_1(M)$ contains an element of infinite order
  implies that the same is true for one of the~$\Gamma_i$. If
  $g_1\in\Gamma_1$ has infinite order, then by the assumption about
  centralisers, the centraliser~$C_{\Gamma}(g_1)$ of~$g_1$
  in~$\Gamma$, which is a subgroup of the amenable
  group~$C_{\pi_1(M)}(g_1)$, is amenable.  But $\Gamma_2\subset
  C_{\Gamma}(g_1)$, because the $\Gamma_i$ commute. Therefore
  $\Gamma_2$ is amenable.

  Proposition~\ref{p:natural} also holds for homology with real
  coefficients, and so we have homology classes $\alpha_1 \in
  \hsing{\dim X_1}{B\Gamma_1;\R}$ and $\alpha_2 \in \hsing{\dim
    X_2}{B\Gamma_2;\R}$ satisfying
\[ 
  H_n(c_M)(d \cdot [M]) = H_n(\overline{B\varphi})(\alpha_1 \times\alpha_2 ) \ . 
\]
We now apply the functorial $\ell^1$-semi-norm to this equation. On
the one hand, by the assumption on the simplicial volume of~$M$ and
the degree~$d$, the mapping theorem for bounded
cohomology~\cite[p.~40/17]{Gromov} yields
\[
  \lone[big]{\hsing n {c_M}(d \cdot [M])} = \vert d \vert \cdot   \sv M \neq 0 \ .
\]
On the other hand, $ \lone{\alpha_2}=0$ because $\Gamma_2$ is
amenable~\cite[p.~40/17]{Gromov}, \cite[Theorem~4.3]{ivanov}, which
implies
\[
  \lone[big]{\hsing n {\overline{B\varphi}}(\alpha_1 \times\alpha_2)} 
  \leq 2^{\dim M} \cdot \lone{\alpha_1} \cdot \lone{\alpha_2} = 0 
\]
by the almost-multiplicativity of the norm.  This contradiction
completes the proof that $P\ngeq M$.
\end{proof}

The assumption that all elements of infinite order have amenable
centralisers is quite restrictive, but can in practice be relaxed. For
example, we have the following variation on Theorem~\ref{svthm}:
\begin{cor}\label{c:amen}
  Let $M$ be a closed, oriented, connected manifold with non-zero
  simplicial volume.  Assume that $\pi_1(M)$ fits into an extension
\begin{equation}\label{eq:ext}
1\longrightarrow K\longrightarrow\pi_1(M)\stackrel{\pi}{\longrightarrow} Q\longrightarrow 1 \ ,
\end{equation}
where $K$ is amenable and $Q$ is a torsion-free group in which every
non-trivial element has an amenable centraliser.  Then $P\ngeq M$ for
any non-trivial product~$P$ of closed, oriented, connected manifolds.
\end{cor}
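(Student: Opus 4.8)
The plan is to adapt the proof of Theorem~\ref{svthm}, replacing the one step that produced an amenable factor by an argument exploiting the extension~\eqref{eq:ext}. As there, I would suppose for a contradiction that a non-trivial product $P=X_1\times X_2$ admits a map $f\colon P\longrightarrow M$ of degree $d\neq 0$, and use Lemma~\ref{l:easy} to obtain commuting subgroups $\Gamma_1,\Gamma_2\subset\pi_1(M)$ whose multiplication map is surjective onto a finite index subgroup $\Gamma$. The aim is again to show that one of the $\Gamma_i$ is amenable; once this is established, the argument concludes verbatim as in Theorem~\ref{svthm}: Proposition~\ref{p:natural} with real coefficients produces classes $\alpha_1,\alpha_2$ with $H_n(c_M)(d\cdot[M])=H_n(\overline{B\varphi})(\alpha_1\times\alpha_2)$, and applying the functorial $\ell^1$-semi-norm gives $|d|\cdot\sv M\neq 0$ on the left but $0$ on the right, since the class coming from the amenable factor has vanishing semi-norm.

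The place where the present hypotheses are weaker is exactly the step in which Theorem~\ref{svthm} used that the centraliser of an element of infinite order is amenable. This is no longer available directly, since an infinite order element of $\Gamma_1$ might lie in the amenable normal subgroup $K$ and yet have a non-amenable centraliser in $\pi_1(M)$ (for instance if it is central). To circumvent this, I would push the whole configuration down to $Q$ via $\pi$, obtaining commuting subgroups $\pi(\Gamma_1),\pi(\Gamma_2)\subset Q$, and distinguish two cases. If $\pi(\Gamma_i)$ is trivial for some $i$, then $\Gamma_i\subset K$ and is amenable as a subgroup of the amenable group $K$. Otherwise both images are non-trivial; choosing any non-trivial $q_1\in\pi(\Gamma_1)$, the commutativity of the images forces $\pi(\Gamma_2)\subset C_Q(q_1)$, which is amenable by the hypothesis on $Q$. (Torsion-freeness of $Q$ simply reconciles the ``non-trivial'' and ``infinite order'' formulations of this hypothesis.)

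In this second case $\Gamma_2$ is itself amenable, because it fits into the extension $1\longrightarrow K\cap\Gamma_2\longrightarrow\Gamma_2\longrightarrow\pi(\Gamma_2)\longrightarrow 1$ with amenable kernel (a subgroup of $K$) and amenable quotient, and the class of amenable groups is closed under extensions. So in every case one factor, say $\Gamma_2$, is amenable, which is precisely the input required by the simplicial-volume estimate above. I expect the main obstacle to be this reduction: one must resist running the centraliser argument inside $\pi_1(M)$, where it breaks down, and instead carry it out in the quotient $Q$, where only non-triviality of the relevant image---and not any information about its centraliser in $\pi_1(M)$---is needed. The remaining homological and norm computations are identical to those in the proof of Theorem~\ref{svthm} and introduce nothing new.
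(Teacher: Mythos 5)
Your proposal is correct and follows essentially the same route as the paper's own proof: push the commuting subgroups from Lemma~\ref{l:easy} down to $Q$, split into the cases $\pi(\Gamma_i)$ trivial (so $\Gamma_i\subset K$ is amenable) versus both images non-trivial (so one image lies in an amenable centraliser, and closure of amenability under extensions makes the corresponding $\Gamma_i$ amenable), and then conclude via the $\ell^1$-semi-norm computation of Theorem~\ref{svthm}. The only difference is that you spell out explicitly the choice of a non-trivial $q_1\in\pi(\Gamma_1)$ and the extension $1\to K\cap\Gamma_2\to\Gamma_2\to\pi(\Gamma_2)\to 1$, which the paper leaves implicit.
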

\begin{proof}
  If $f\colon P\longrightarrow M$ is of non-zero degree, we can again
  apply Lemma~\ref{l:easy} to find commuting
  subgroups~$\Gamma_1$,~$\Gamma_2\subset\pi_1(M)$ for which the
  multiplication map~$\Gamma_1\times\Gamma_2\longrightarrow\pi_1(M)$
  is surjective onto a subgroup of~$\pi_1(M)$ of finite index. It
  follows that $\pi(\Gamma_1)$ and $\pi(\Gamma_2)$ are commuting
  subgroups of $Q$ generating a finite index subgroup. If
  $\pi(\Gamma_1)$ is non-trivial, then it follows from the assumption
  about centralisers in $Q$ that $\pi(\Gamma_2)$ is amenable. As the
  class of amenable groups is closed under extensions, we conclude
  that $\Gamma_2$ is amenable, which gives a contradiction as in the
  proof of Theorem~\ref{svthm}. If $\pi(\Gamma_1)$ is trivial, then
  $\Gamma_1\subset K$ is amenable because it is a subgroup of an
  amenable group, and we again have a contradiction.
\end{proof}
Corollary~\ref{c:amen} excludes domination by products for many
manifolds whose fundamental groups are presentable by products. For
example, the extension~\eqref{eq:ext} could be central, so that $K$ is
Abelian and in the centre of~$\pi_1(M)$. In this situation
Lemma~\ref{l:C} implies that $\pi_1(M)$ is presentable by a product,
so that Theorem~\ref{t:top} cannot be used to prove~$P\ngeq M$. In
general, subexponential extensions in the sense of
Sambusetti~\cite{Sam} provide examples of amenable extensions, because
any group of subexponential growth is amenable.

Here is a concrete example demonstrating the applicability of
Corollary~\ref{c:amen}.
\begin{ex}
  Let $X=\C H^2/Q$ be a smooth compact complex ball quotient. Then $X$
  is a complex-algebraic surface whose fundamental group $Q$ is
  torsion-free and hyperbolic (and therefore has virtually cyclic
  centralisers). In particular it satisfies the assumptions made on
  $Q$ in Corollary~\ref{c:amen}. Let $Y$ be an elliptic curve, and
  $M\subset X\times Y$ a smooth hyperplane section. Then
  $\pi_1(M)=\pi_1(X\times Y)=Q\times\Z^2$ by the Lefschetz hyperplane
  theorem.  Moreover, the projection of $X\times Y$ to the first
  factor restricts to a surjective holomorphic map $M\longrightarrow
  X$, which shows $M\geq X$.  As $X$ has strictly negative sectional
  curvature, its simplicial volume is positive, and the relation
  $M\geq X$ shows that $M$ also has positive simplicial volume. Thus
  Corollary~\ref{c:amen} shows $P\ngeq M$ for any non-trivial product
  $P$, although $\pi_1(M)$ is a product.
\end{ex}
On the one hand, this example certainly shows that
Corollary~\ref{c:amen} is not empty, although it is conjectured that a
closed aspherical manifold whose fundamental group contains a
non-trivial amenable normal subgroup has vanishing simplicial volume.
The manifold $M$ in the example is rationally essential with non-zero
simplicial volume, but not aspherical. On the other hand, for this
particular $M$ the conclusion $P\ngeq M$ also follows from the
combination of $M\geq X$ and $P\ngeq X$, with the latter being a
consequence of Theorems~\ref{t:top} and~\ref{t:alg}.

\section{Outlook}\label{s:final}

In this section we discuss the relation between the domination
relation and the ranks of fundamental groups, and we also consider the
domination relation in general, without restricting to product domains
and to targets with large fundamental groups.

\subsection*{Relationship with the rank of groups}

Building on ideas of Tits, Prasad and Raghunathan, Ballmann and
Eberlein~\cite{BE} introduced an abstract notion of the rank of a
group $\Gamma$. This notion is a measure of the size of centralisers
of generic elements for all finite index subgroups of $\Gamma$. In
particular, the rank does not change under passage to a subgroup of
finite index.  It is a result of Prasad--Raghunathan~\cite{PR} that
the rank of a cocompact lattice in the isometry group of a symmetric
space of non-compact type coincides with the rank (in the sense of Lie
theory) of the symmetric space. Ballmann--Eberlein~\cite{BE} proved
that for the fundamental groups of closed manifolds of non-positive
sectional curvature the rank agrees with the geometric rank of the
Riemannian metric defined via spaces of parallel Jacobi fields (which
in turn agrees with the Lie theoretic rank in the case of locally
symmetric spaces).

The groups occurring in Theorem~\ref{t:alg} are all of rank one. In
the statement~(N-P) this is part of the assumption because, by the
result of Ballmann--Eberlein, the geometric and algebraic rank are the
same. For infinite hyperbolic groups it is well-known folklore that
they have rank one -- indeed one could argue that this is part of the
definition, or at least of the philosophy behind their
definition~\cite{gromovhyperbolic}. For mapping class groups the
determination of the rank is a theorem of Ivanov~\cite{IvanRank}.

It is possible that Theorem~\ref{t:alg} is a special case of a more
general result asserting that rank one groups in the sense of
Ballmann--Eberlein that are not virtually cyclic are not presentable
by products.  The proof of such a statement in the general case runs
into many algebraic technicalities, so we do not pursue it here.
Moreover, in concrete cases it has turned out to be easier to prove
directly that a group is not presentable by a product, instead of
proving that it has rank one.  A case in point is provided by the
mapping class groups, for which our proof of statement~(MCG) in
Theorem~\ref{t:alg} is easier than Ivanov's proof~\cite{IvanRank} that
the groups have rank one.

\subsection*{The domination relation in general}

In this paper we have shown that many manifolds with large fundamental
groups are not dominated by products.  In contrast with this, it could
be true that all manifolds with finite fundamental groups are
dominated by products.  Using the results of Duan--Wang~\cite{DW1,DW2}
this is easy to verify in dimension four:
\begin{prop}\label{p:4mfd}
  Every closed oriented connected four-manifold with finite
  fundamental group is dominated by the product of a torus with a
  suitable closed oriented connected surface.
\end{prop}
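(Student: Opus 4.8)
The plan is to reduce to the simply-connected case by passing to the universal covering, and then to realise the intersection form of the simply-connected target inside that of a product $T^2\times\Sigma_g$ by invoking the work of Duan--Wang. Since $G:=\pi_1(M)$ is finite, the universal covering $p\colon\widetilde M\longrightarrow M$ is a $|G|$-sheeted covering of closed, oriented, connected manifolds, so $\deg p=|G|\neq 0$ and hence $\widetilde M\geq M$. As the domination relation is transitive, it suffices to produce a non-zero degree map $T^2\times\Sigma_g\longrightarrow\widetilde M$ for a suitable genus~$g$, where $\widetilde M$ is now a closed, oriented, \emph{simply-connected} four-manifold.

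For a simply-connected target the relevant invariant is the intersection form. If $f\colon X\longrightarrow V$ is a map of degree $d$ between closed oriented four-manifolds, then $\langle f^{*}a\cup f^{*}b,[X]\rangle=d\,\langle a\cup b,[V]\rangle$, so that $f^{*}$ embeds the intersection lattice $(H^2(V;\Z),Q_V)$ isometrically into $(H^2(X;\Z),Q_X)$ with induced form equal to $d\cdot Q_V$. Conversely, the results of Duan--Wang~\cite{DW1,DW2} show that for simply-connected four-manifolds this algebraic condition is essentially sufficient: an isometric embedding of $d\cdot Q_V$ into $Q_X$ (which for $g$ large we may moreover take to be primitive) is realised by a map $X\longrightarrow V$ of degree $d$. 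Thus it remains to choose $X=T^2\times\Sigma_g$ so that $d\cdot Q_{\widetilde M}$ embeds into $Q_{X}$ for some $d\neq 0$.

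Here I would use that $T^2\times\Sigma_g$ is spin, has signature zero by multiplicativity of the signature, and has a unimodular form by Poincar\'e duality; hence $Q_{T^2\times\Sigma_g}\cong(2g+1)\,U$, where $U$ denotes the hyperbolic plane, and letting $g\to\infty$ makes the number of hyperbolic summands arbitrarily large. Given $\widetilde M$ with $b_2=:b$, I would take $d=2$ (or $d=1$ if $Q_{\widetilde M}$ is already even); then $d\cdot Q_{\widetilde M}$ is an even lattice of rank $b$, and an elementary construction embeds any even lattice of rank $b$ isometrically into $b$ copies of $U$ (send the $i$-th basis vector to $f_i+\sum_j c_{ij}g_j$ in hyperbolic pairs $(f_i,g_i)$, solving $c_{ii}=A_{ii}/2$ and $c_{ii'}+c_{i'i}=A_{ii'}$, which is possible precisely because the diagonal entries $A_{ii}$ are even). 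Choosing $g$ with $2g+1\geq b$ therefore produces the required isometric embedding $d\cdot Q_{\widetilde M}\hookrightarrow Q_{T^2\times\Sigma_g}$, and hence a map of degree $d$.

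The main obstacle is the realisation step, namely passing from the purely lattice-theoretic embedding to an actual map of the prescribed degree; this is exactly the content of the Duan--Wang theory and is the only non-elementary ingredient. The remaining points---the covering-space reduction, the computation $Q_{T^2\times\Sigma_g}\cong(2g+1)U$, and the embedding of even lattices into $(2g+1)U$---are routine, the one piece of bookkeeping being the passage to even degree $d$ needed to accommodate odd intersection forms such as that of $\C P^2$.
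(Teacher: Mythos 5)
Your proposal is correct and follows essentially the same route as the paper: reduce to the simply connected universal cover via transitivity of~$\geq$, note that $Q_{T^2\times\Sigma_g}$ is a large even indefinite unimodular form of signature zero, embed $d\cdot Q_{\widetilde M}$ (with $d$ even to accommodate odd forms) into it for $g$ large, and invoke Duan--Wang to realise the algebraic embedding by a map of degree~$d$. Your explicit embedding of an even lattice into copies of the hyperbolic plane merely makes concrete the step the paper handles by taking $b_2(T^2\times\Sigma_g)$ large compared with $b_2^{\pm}$ of the target.
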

\begin{proof}
  Clearly it is enough to prove the statement for simply connected
  targets.

  Let $P$ and $N$ be closed oriented four-manifolds with intersection
  forms $Q_P$ and $Q_N$ respectively.  If $f\colon P\longrightarrow N$
  is a map of degree $d$, then $d\cdot Q_N$ is embedded into $Q_P$ by
  the pullback $H^*(f)$. (Here $d\cdot Q_N$ means that all entries are
  multiplied by $d$.) It is a result of Duan and
  Wang~\cite[Theorem~3]{DW2} that for simply connected targets $N$ a
  map of degree $d$ exists whenever the necessary algebraic condition
  $d\cdot Q_N\subset Q_P$ is satisfied.

  We apply this in the case where $P= S^1\times S^1 \times\Sigma_g$
  for some~$g \in \bN$. This $P$ has zero signature, so that the
  embedding of intersection forms exists whenever the second Betti
  number of $P$ is large enough compared with the positive and
  negative parts of~$Q_N$ (which may have different rank because $N$
  may have non-zero signature). If $Q_N$ is odd, we have to choose $d$
  even to find such an algebraic embedding, because $Q_P$ is even. It
  suffices to take $g$ large with respect to $b^{\pm}_2(N)$.
\end{proof}

Carlson and Toledo~\cite[p.~174]{CT} mentioned that in arbitrary
dimensions there is no easily described class of manifolds forming
maximal elements for the relation $\geq$ in the sense that for every
closed oriented $M$ there should be an $N$ from this class such that
$N\geq M$. In this direction we propose the following:
\begin{conj}\label{max}
  In every dimension $n\geq 2$, closed oriented hyperbolic manifolds
  represent a maximal class of homotopy types with respect to the
  domination relation $\geq$.
\end{conj}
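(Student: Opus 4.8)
The statement asserts that the class of closed oriented hyperbolic $n$-manifolds is \emph{cofinal} for the domination relation, in exactly the sense of the Carlson--Toledo problem recalled above: for every closed oriented $n$-manifold $M$ there should exist a closed oriented hyperbolic $n$-manifold $N$ with $N\geq M$. Since $\geq$ is transitive, the plan is to split this into two reductions and attack them separately. First I would isolate a flexible cofinal family $\mathcal D$ of $n$-manifolds that visibly dominates \emph{every} closed oriented $M$; second, I would show that each member of $\mathcal D$ is itself dominated by a hyperbolic manifold. Chaining the two through transitivity would give the conjecture.

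For the base of such an argument, dimension two is already settled: as noted in the introduction, $\geq$ there coincides with the genus order, and every closed oriented surface is dominated by one of genus $\geq 2$, all of which are hyperbolic. This both grounds the statement and indicates the desired shape of the argument in higher dimensions.

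For the first reduction I would seek a nonzero-degree map onto an arbitrary target $M$ from a manifold with especially transparent topology, obtained from a handle or surgery description of $M$ by pinching the attaching data. The aim is a family $\mathcal D$ — built, say, from connected sums of products of spheres with low-genus surfaces, or from other explicitly understood models — from which dominating maps onto all the relevant homotopy types can be written down directly. In favourable dimensions this mirrors the mechanism behind Proposition~\ref{p:4mfd}, where the existence of a map of nonzero degree is reduced (via Duan--Wang) to a purely algebraic compatibility of fundamental classes, so that only such compatibility need be verified for the models in $\mathcal D$.

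The genuinely hard step — and the reason this remains a conjecture — is the second reduction: producing a hyperbolic $N$ that maps with nonzero degree onto a prescribed $D\in\mathcal D$. In dimensions $\geq 3$ hyperbolic manifolds are rigid (Mostow rigidity leaves no moduli) and comparatively scarce, so one cannot deform a given map into the hyperbolic range; instead one must \emph{construct} hyperbolic manifolds whose homology and fundamental group are rich enough to carry the required map. The natural tools here are Thurston's hyperbolic Dehn surgery together with arithmetic and gluing constructions that yield hyperbolic manifolds of prescribed, arbitrarily large topology, and the delicate task is to realise the dominating map while keeping the total space hyperbolic. Monotonicity of the simplicial volume under $\geq$, positive on hyperbolic targets, gives a necessary consistency check and a source of obstructions, but converting these constructions into an actual dominating map in every dimension is precisely the difficulty that the conjecture leaves open.
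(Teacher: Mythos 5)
The statement you are asked about is labelled a \emph{conjecture} in the paper, and the paper does not prove it; it only assembles evidence: the case $n=2$ is trivial, the case $n=3$ is a theorem of Brooks on branched coverings of $3$-manifolds fibering over the circle, and in dimension four the paper proves a strong form for targets with finite fundamental group by combining the Duan--Wang realisation theorem with residual finiteness of hyperbolic lattices (passing to finite covers $\bar N$ of a fixed hyperbolic four-manifold to make $b_2(\bar N)$ large while keeping the signature zero, so that the algebraic embedding $d\cdot Q_M\subset Q_{\bar N}$ can be arranged). Your write-up is likewise not a proof: you correctly identify the statement as cofinality of hyperbolic manifolds for $\geq$, propose a two-step factorisation through an intermediate dominating class $\mathcal D$, and then openly concede that the second step --- producing a hyperbolic $N$ with $N\geq D$ for each $D\in\mathcal D$ --- is unresolved. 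That concession is exactly where the mathematical content would have to be; as written, nothing beyond the trivial $n=2$ case is established.

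Two further points of comparison. First, in the one higher-dimensional case the paper can actually handle ($n=4$, finite fundamental group), it does \emph{not} route through an intermediate class: the hyperbolic manifold dominates the target directly, because Duan--Wang reduces the existence of a degree-$d$ map onto a simply connected four-manifold to an embedding of intersection forms, which finite covers of a hyperbolic four-manifold satisfy for free. Your first reduction (building $\mathcal D$ from connected sums of sphere products and surface products) is therefore an extra layer that the known cases do not need, and your second reduction would then require hyperbolic manifolds dominating such products --- which sits in tension with the paper's own main theorems, since those very products tend to have vanishing simplicial volume or reducible fundamental groups and hence can receive, but severely constrain, maps from hyperbolic manifolds only in the correct direction; the point is that you must check the direction of each domination carefully when choosing $\mathcal D$. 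Second, you omit the paper's actual evidence for $n=3$ (Brooks) and its closing heuristic, namely that a maximal class should consist of manifolds dominated by very few others, which hyperbolic manifolds satisfy by the paper's own results together with the simplicial volume and harmonic map obstructions. In short: your strategy is a reasonable research programme, but the conjecture remains exactly that, and your proposal does not close the gap.
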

This is trivially true for $n=2$, and, more interestingly, it is known
to be true for $n=3$ by a result of Brooks~\cite{Brooks}.  We can also
verify a strong form of the conjecture for four-manifolds with finite
fundamental groups:
\begin{prop}
  Let $M$ be a closed oriented connected four-manifold with finite
  fundamental group. Then every closed oriented hyperbolic manifold
  $N$ virtually dominates $M$, i.e., some finite covering space of~$N$
  dominates~$M$.
\end{prop}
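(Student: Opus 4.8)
The plan is to reduce the statement to a single application of the Duan--Wang realisation theorem \cite{DW2}, fed by two elementary facts about hyperbolic four-manifolds under finite coverings. Note first that a non-zero degree map forces equal dimensions, so $N$ is necessarily a hyperbolic four-manifold. The first step is to reduce to a simply connected target: since $\pi_1(M)$ is finite, the universal covering $\widehat M\longrightarrow M$ is a finite covering of closed oriented four-manifolds, hence a map of non-zero degree, so $\widehat M\geq M$. By transitivity of the domination relation it therefore suffices to produce, for the given $N$, a finite covering $\bar N$ with $\bar N\geq\widehat M$, where $\widehat M$ is now closed, oriented and simply connected.

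For a simply connected target the existence of a degree~$d$ map $\bar N\longrightarrow\widehat M$ is equivalent, by \cite[Theorem~3]{DW2}, to the algebraic condition that $d\cdot Q_{\widehat M}$ embed into the intersection form $Q_{\bar N}$, exactly as in the proof of Proposition~\ref{p:4mfd}. So the whole problem becomes: find a finite covering $\bar N$ of $N$ whose intersection form contains $d\cdot Q_{\widehat M}$ for some $d\neq0$. Over the reals this asks only that $b_2^{+}(\bar N)\geq b_2^{+}(\widehat M)$ and $b_2^{-}(\bar N)\geq b_2^{-}(\widehat M)$, with the integral embedding then arranged by scaling.

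The key point, and the reason no deep input on hyperbolic lattices (such as virtual positivity of the first Betti number) is needed, is that both $b_2^{\pm}(\bar N)$ grow without bound along finite coverings. Indeed, by Gauss--Bonnet a hyperbolic four-manifold satisfies $\chi(N)>0$, so $\chi(\bar N)=[\bar N:N]\cdot\chi(N)\longrightarrow\infty$; since $\chi=2-2b_1+b_2$ this forces $b_2(\bar N)\geq\chi(\bar N)-2\longrightarrow\infty$. Moreover a hyperbolic four-manifold is conformally flat, so its first Pontryagin form vanishes pointwise and hence $\sigma(\bar N)=0$ for every cover; thus $b_2^{+}(\bar N)=b_2^{-}(\bar N)=b_2(\bar N)/2\longrightarrow\infty$. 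Choosing the covering large enough that $b_2^{\pm}(\bar N)\geq b_2^{\pm}(\widehat M)$ and then invoking Duan--Wang yields the desired map $\bar N\longrightarrow\widehat M$ of non-zero degree.

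The step to watch is the passage from the real inequalities to an integral embedding of intersection forms: one must embed the even form $d\cdot Q_{\widehat M}$ into $Q_{\bar N}$, which need not be even since $N$ need not be spin. I would handle this exactly as in Proposition~\ref{p:4mfd}, by taking $d$ even and using that an even form embeds into any indefinite unimodular form of sufficiently large rank in each signature (for instance a hyperbolic plane embeds into $\langle 1\rangle^{2}\oplus\langle -1\rangle$, at the cost of one extra generator). The unbounded growth of $b_2^{\pm}(\bar N)$ established above guarantees that the requisite rank is available, which closes the argument.
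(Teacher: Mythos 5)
Your argument is correct and follows essentially the same route as the paper's: reduce to a simply connected target via the universal covering, invoke the Duan--Wang realisation theorem, and use $\chi(N)>0$ together with $\sigma(N)=0$ to make $b_2^{\pm}(\bar N)$ unbounded along finite coverings, finishing with an even multiple of the intersection form. The only ingredient you leave implicit is that $N$ actually admits finite coverings of arbitrarily large degree; the paper secures this by noting that $\pi_1(N)$ is a residually finite lattice, and you should record that step as well.
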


\begin{proof}
  The proof is almost the same as the proof of
  Proposition~\ref{p:4mfd}. Like that proof, this one is a direct
  consequence of the result of Duan--Wang~\cite[Theorem~3]{DW2},
  because we may assume $M$ to be simply connected.

  Fix a simply connected closed oriented four-manifold $M$ and a
  closed oriented hyperbolic four-manifold $N$. We want to show that
  $N$ virtually dominates $M$. Recall that $N$ has positive Euler
  characteristic and zero signature~\cite{K}.  Moreover, $\pi_1(N)$ is
  a residually finite lattice, and so $N$ has finite coverings $\bar
  N$ of arbitrarily large degrees, equivalently of arbitrarily large
  Euler characteristics. This means that the second Betti numbers of
  these coverings are unbounded, and once they are large enough, we
  can verify the algebraic criterion $d\cdot Q_M\subset Q_{\bar N}$ of
  Duan--Wang~\cite[Theorem~3]{DW2}, say with $d=2$.
\end{proof}
Finally, in support of Conjecture~\ref{max}, we would like to mention
the following heuristic. A candidate class of maximal elements with
respect to $\geq$ should consist of manifolds that are not themselves
dominated by too many other manifolds, because otherwise the
transitivity of the relation might lead to contradictions. Hyperbolic
manifolds are indeed dominated by very few other manifolds: not by
non-trivial products (by our results), not by manifolds with zero
simplicial volume~\cite{Gromov}, not by K\"ahler
manifolds~\cite{CT,ABCKT}, and not even by certain K\"ahler--Weyl
manifolds~\cite{KK}.

\bibliographystyle{amsplain}

\bigskip

\end{document}